\numberwithin{equation}{section}
\theoremstyle{plain}
\newtheorem{theorem}{Theorem}[section]
\newtheorem{lemma}{Lemma}[section]
\theoremstyle{definition}
\theoremstyle{remark}
\newtheorem{remark}{Remark}[section]
\newcommand{\ud}{\, \mathrm{d}}
\newcommand{\Real}{\mathbb R}
\newcommand{\Int}{\mathbb Z}
\newcommand{\bb}{{\mathbf b}}
\newcommand{\Db}{D}%{{\mathbf D}}
\newcommand{\eb}{{\mathbf e}}
\newcommand{\fb}{{\mathbf f}}
\newcommand{\gb}{{\mathbf g}}
\newcommand{\rhob}{{\bm \rho}}
\newcommand{\sigb}{{\bm \sigma}}
\newcommand{\ub}{{\mathbf u}}
\newcommand{\vb}{{\mathbf v}}
\newcommand{\xb}{{\mathbf x}}
\newcommand{\yb}{{\mathbf y}}
\newcommand{\At}{\widetilde{A}}
\newcommand{\Bt}{\widetilde{B}}
\newcommand{\bbt}{\widetilde{\bb}}
\newcommand{\xbt}{\widetilde{\xb}}
\newcommand{\E}{{\mathcal E}}
\newcommand{\F}{{\mathcal F}}
\newcommand{\lpnorm}[2]{\left\|#1\right\|_{\ell^{#2}_h}}
\newcommand{\Lpnorm}[2]{\left\|#1\right\|_{L^{#2}}}
\newcommand{\half}{{\textstyle \frac{1}{2}}}
\DeclareMathOperator*{\esssup}{ess \ sup}
\begin{document}

%%%%%%%%%%%%%
% Topmatter %
%%%%%%%%%%%%%
\title[An Optimal Order Error Analysis of the Quasicontinuum Approximation]{An Optimal Order Error Analysis of the One-Dimensional Quasicontinuum Approximation}
%%%%%%%%%%%
% Authors %
%%%%%%%%%%%
\author{Matthew Dobson}
\author{Mitchell Luskin}

\address{Matthew Dobson\\
School of Mathematics \\
University of Minnesota \\
206 Church Street SE \\
Minneapolis, MN 55455 \\
U.S.A.}
\email{dobson@math.umn.edu}
\address{Mitchell Luskin \\
School of Mathematics \\
University of Minnesota \\
206 Church Street SE \\
Minneapolis, MN 55455 \\
U.S.A.}
\email{luskin@umn.edu}

%%%%%%%%%%%%%%%%%%
% Acknowledgments %
%%%%%%%%%%%%%%%%%%
\thanks{
This work was supported in part by DMS-0757355,
 DMS-0811039,  the Institute for Mathematics and
Its Applications,
 the University of Minnesota Supercomputing Institute, and
the University of Minnesota Doctoral Dissertation Fellowship.
  This work is also based on
work supported by the Department of Energy under Award Number
DE-FG02-05ER25706.
}

%%%%%%%%%%%%
% Keywords %
%%%%%%%%%%%%
\keywords{quasicontinuum, error analysis, atomistic to continuum}

%%%%%%%%%%%%%%%%%
% Subject class %
%%%%%%%%%%%%%%%%%
\subjclass[2000]{65Z05,70C20}

%%%%%%%%
% Date %
%%%%%%%%
\date{\today}

%%%%%%%%%%%%
% Abstract %
%%%%%%%%%%%%
\begin{abstract}
We derive a model problem for quasicontinuum approximations
that allows a simple, yet insightful, analysis of
the optimal-order convergence rate in the continuum limit
for both the energy-based quasicontinuum approximation and the
quasi-nonlocal quasicontinuum approximation.  The optimal-order
error estimates for the quasi-nonlocal quasicontinuum approximation
are given for all strains up to the continuum limit strain for fracture.
The analysis is based on an explicit treatment of the
coupling error at the atomistic to continuum interface, combined
with an analysis of the error due to atomistic and continuum
schemes using the stability of the quasicontinuum approximation.
\end{abstract}

%%%%%%%%
% Body %
%%%%%%%%
\maketitle
{
\thispagestyle{empty}

\section{Introduction}
The quasicontinuum method (QC) denotes a class of related
approximations of fully atomistic models for crystalline solids that
reduces the degrees of freedom necessary to compute a deformation to a
desired accuracy~\cite{pinglin03, pinglin05, legollqc05, ortnersuli,
OrtnerSueli:2006d, e05, tadm96, knaportiz, e06, mill02, rodney_gf,
miller_indent, curtin_miller_coupling, jacobsen04,dobs08}.  The
derivation of a quasicontinuum method first removes atomistic degrees
of freedom by using a piecewise linear approximation of the atom
deformations with respect to a possibly much smaller number of
representative atoms.  Since atoms interact significantly with atoms
beyond their nearest neighbors, a further approximation is required to
obtain a computationally feasible method.

In this paper, we will analyze two QC variants that approximate
the total atomistic energy by using a continuum approximation in a
portion of the material called the continuum region.  The deformation
gradient is assumed to be slowly varying in the continuum region,
making the continuum approximation accurate.  The more computationally
intensive atomistic model is used for the remainder of the
computational domain, which is called the atomistic region.  In this
region, all of the atoms are representative atoms, so that there is no
restriction of the types of deformations in the atomistic region.  To
maintain accuracy, the atomistic region must contain all regions of
highly varying deformation, such as material defects.  Adaptive
methods that determine what portion of the domain should be assigned
to the atomistic region in order to acheive the required accuracy have
been considered in~\cite{ortnersuli, OrtnerSueli:2006d, prud06,
oden06, arndtluskin07a, arndtluskin07b, ArndtLuskin:2007c}.  Other
approaches to atomistic to continuum coupling have been developed and
analyzed in~\cite{BadiaParksBochevGunzburgerLehoucq:2007,
ParksBochevLehoucq:2007}, for example.

In Section~\ref{sec:model}, we derive a model problem for QC
approximations and describe the energy-based quasicontinuum (QCE)
approximation and the quasi-nonlocal quasicontinuum (QNL)
approximation.  These two approximations use the same continuum
approximation, but differ in how they couple the atomistic and
continuum regions.  We also give stability estimates for the two QC
variants.

We have derived our model quasicontinuum energy from a general
quasicontinuum energy by expanding each interaction to second order to
be able to present a simple, but illuminating, analysis.  The model
differs from a standard quadratic approximation by keeping certain
first-order terms.  These are a source of leading-order coupling
error, and reflect the behavior in the non-linear case.   We have also
chosen to analyze the model problem for boundary conditions given by
restricting to periodic displacements to maintain the simplicity of
the analysis.

The goal of this paper is to give an error analysis as the number of
atoms per interval increases (the continuum limit).  The residual at
atoms in the coupling interface is lower order (order O($1/h$) and
O(1) for QCE and QNL, respectively) than the residual in either the
atomistic or continuum region (O($h^2$) in all cases).  However, the
corresponding error depends primarily on the {\bf{sum}} of the
residual at the atoms in the atomistic to continuum coupling
interface, and this sum has the higher order O($h$) due to the
cancellation of the lowest order terms when the residual is summed
across the interface.

In Section~\ref{sec:comp}, we split the residual for the QCE
approximation into the part due to the continuum approximation and the
part due to  coupling the atomistic and continuum regions.  The
stability of the QCE approximation and the O($h^2$) estimate for the
corresponding residual combine to give an optimal order bound on the
error due to the continuum approximation.  We then derive an explicit
representation of the coupling error, and we observe that this error
is small and decays away from the interface since the coupling
residual is oscillatory, as described in the preceding paragraph.  The
coupling error is the leading order term in the total error,
dominating the continuum approximation error.

We show that the displacement converges at the rate O($h$) in the
discrete $l^\infty$ norm and the rate O($h^{1/p}$) in the $w^{1,p}$
norms where $h$ is the interatomic spacing.  Our analysis extends the
results of E, Ming, and Yang~\cite{emingyang} that show that the error
is O($1$) in the $w^{1,\infty}$ norm for the QCE method applied to a
problem with harmonic interactions and Dirichlet boundary conditions.

In Section~\ref{sec:qnl}, we present the same analysis in the QNL
case.  Here we show that the improved order of accuracy in the
coupling interface serves to nearly balance the order of the error due
to the continuum approximation, and we are consequently able to give
higher order optimal error estimates for the QNL approximation than
for the QCE approximation.  We
show that the displacement
now converges at the rate O($h^2$) in the discrete $l^\infty$ norm and
the rate O($h^{1+1/p}$) in the
$w^{1,p}$ norms where $h$ is the interatomic spacing.
E, Ming, and Yang~\cite{emingyang} have obtained
O($h$) esimtates in the $w^{1,\infty}$ norm for the Lennard-Jones potential and
 strains bounded away from the continuum limit strain for fracture.
We have obtained optimal order error estimates for the discrete $l^\infty$
and
$w^{1,p}$ norms
for all strains up to the continuum limit strain for fracture.

This paper extends our analysis of the effect of atomistic to
continuum model coupling on the total error in the energy-based
quasicontinuum approximation~\cite{dobs08c} to include external
forcing.  We also now include an analysis of the quasi-nonlocal
approximation.

\section{One-Dimensional, Linear Quasicontinuum Approximation}
\label{sec:model}
We consider the periodic displacement
from a one-dimensional reference lattice with spacing $h=1/N,$  and we denote the
positions of the atoms in the reference lattice by
\begin{equation*}
x_j := jh,\qquad -\infty<j<\infty.
\end{equation*}
We will derive and analyze the linearization about a uniform
deformation gradient $F$ given by the
deformation
\begin{equation}\label{F}
y^F_j := jFh=ja,\qquad -\infty<j<\infty,
\end{equation}
which is a lattice with spacing
$a:=Fh.$
We will then consider perturbations $u_j$ of the
lattice $y^F_j$ which are $2N$ periodic in $j,$ that is, we will
consider deformations $y_j$ where
\begin{equation*}
y_j := y^F_j+u_j,\qquad -\infty<j<\infty,
\end{equation*}
for
\begin{equation}\label{per}
u_{j+2N} = u_j ,\qquad -\infty<j<\infty.
\end{equation}
We will often describe the perturbations $u_j$
satisfying~\eqref{per} as displacements
(which they are if $y^F_j$ is considered the reference lattice).
We thus have that the deformation satisfies
\begin{equation}\label{perF}
y_{j+2N} = y_j+2F,\qquad -\infty<j<\infty.
\end{equation}
We note that neither the
reference lattice spacing $h$ nor the uniform lattice spacing $a$ need be
the equilibrium lattice constant or the well of the interatomic potential.

\subsection{Notation}
Before introducing the models, we fix the following notation.
We define the backward differentiation operator, $\Db \ub,$  on periodic
displacements by
\begin{equation*}
(\Db \ub)_j:=\frac{u_{j}-u_{j-1}}h\qquad\text{for }-\infty<j<\infty.
\end{equation*}
Then $(\Db \ub)_j$ is also $2N$ periodic in $j.$
We will use the short-hand $(\Db \ub)_j=Du_j.$

For periodic displacements, $\ub,$ we define the discrete norms
\begin{equation*}
\begin{split}
\lpnorm{\ub}{p} &:= \left(h \sum^{N}_{j=-N+1}  |u_j|^p\right)^{1/p},
	\qquad 1\le p<\infty,\\
\lpnorm{\ub}{\infty} &:= \max_{-N+1\leq j\leq N}  |u_j|,
\end{split}
\end{equation*}
 By including the
whole period, we ensure that they are all norms (in particular,
$\lpnorm{\ub}{p}=0$
implies $\ub=0$).
We sum over a single period to make the norms finite.
We will also consider periodic functions $u(x):\Real\to\Real$
satisfying
\begin{equation}\label{percon}
u(x+2)=u(x)\qquad\text{for }x\in\Real.
\end{equation}
We define corresponding continuous norms
\begin{equation*}
\begin{split}
\Lpnorm{u}{p} &:= \left( \int^{1}_{-1} |u(x)|^p \ud x \right)^{1/p},
	\qquad 1\le p<\infty,\\
\Lpnorm{u}{\infty} &:= \esssup_{x \in (-1,1)} |u(x)|.\\
\end{split}
\end{equation*}
We let $u'$ denote the weak derivative of the periodic function $u.$
We note that if $\Lpnorm{u'}{p}<\infty,$ then
 $u(x)$ is continuous for all $x$ in $\Real$ and
 $u(-1)=u(1).$  We will similarly denote higher order weak derivatives
 of the periodic function $u$ as
 $u'',$ $u''',$ and $u^{(4)}.$

\subsection{Atomistic Model}
We first consider the total energy per period
\begin{equation}\label{en1}
\E^{tot,h}(\yb) := \E^{a,h}(\yb)-\F(\yb),
\end{equation}
for deformations $\yb$ satisfying \eqref{perF} where the total atomistic energy per period is
\begin{equation}\label{atoma}
\E^{a,h}(\yb)=
\sum_{j=-N+1}^{N}
h \left[\phi\left( \frac{y_{j} - y_{j-1}}{h} \right)
+ \phi\left(\frac{y_{j} - y_{j-2}}{h}\right)\right]
\end{equation}
for a two-body interatomic
potential $\phi$ (assumptions on the potential are given in
Section~\ref{assumptions}),
and where the total external potential energy per period is
\begin{equation}\label{exta}
\F(\yb)= \sum_{j=-N+1}^{N} h f_j y_{j}
\end{equation}
for periodic dead loads $\fb$ such that $f_{j+2N}=f_j$ and
$\sum_{j=-N+1}^{N} f_j = 0.$

We have scaled the atomistic energy per bond in~\eqref{atoma}
by $h\phi(r/h)$ and the external force per atom by $hf_i$ in \eqref{exta}.
This scaling permits a continuum limit as $h \rightarrow 0.$
If $y,f \in C^\infty(\Real)$ satisfy  $y(x+2)=y(x)+2F,$ $y'(x) > 0,$
$f(x+2)=f(x),$
and $\int_{-1}^{1}f(x)\,dx=0;$ if $\phi(r)$ is locally Lipschitz for
$r \in (0,\infty);$ and if we set
$y_j = y(x_j)$ and $f_j = f(x_j),$ then the energy per
period~\eqref{en1} converges to ~\cite{blanc02}
\begin{equation}\label{conta}
\int^1_{-1} \left[\hat\phi\left(y'(x)\right) - f(x) y(x) \right]\ud x
\end{equation}
as $N \rightarrow \infty \ (\text{which implies } h \rightarrow 0),$
where $\hat\phi(r)=\phi(r)+\phi(2r).$
In the following, we linearize the atomistic model which leads to a
corresponding linearized continuum model.  This paper analyzes the
convergence of two quasicontinuum approximations to the minimizer
of the linearized continuum model's total energy.

\subsection{Linearized Atomistic Model}
We will henceforth consider the linearized version of the above energies
while reusing the notation $\E^{a,h}$ and $\E^{tot,h}.$
The total atomistic energy~\eqref{atoma} becomes~\cite{dobs08c}
\begin{equation}\label{at}
\begin{split}
\E^{a,h}(\ub) &:=  \sum_{j=-N+1}^{N}
h \left[ \phi'_F \left[\frac{u_{j} - u_{j-1}}{h}\right]
  + \half \phi''_{F} \left[\frac{u_{j} - u_{j-1}}{h}\right]^2 \right. \\
&\qquad\qquad \left. + \phi'_{2F} \left[\frac{u_{j} - u_{j-2}}{h}\right]
  + \half \phi''_{2 F} \left[\frac{u_{j} - u_{j-2}}{h}\right]^2
  \right],
\end{split}
\end{equation}
for displacements,
$\ub,$ satisfying the periodic boundary
conditions~\eqref{per}.
Here
$\phi'_F := \phi'(F),   \phi''_{F} := \phi''(F),
\phi'_{2F} := \phi'(2F),  \phi''_{2 F} := \phi''(2F),$
where $\phi$ is the interatomic potential in~\eqref{atoma}.
We have removed the additive constant $2\phi(F) + 2\phi(2F)$ from the
quadratic expansion of the energy, and we will remove the additive
constant $-h \sum_{j=-N+1}^N f_j y^F_j$ from $\F(\ub)$ when computing
the external potential of the displacement $\ub.$
We note that the first order terms in \eqref{at} sum to zero by the periodic boundary
conditions and thus do not contribute to the total energy or the equilibrium equations.
We keep the first order terms in the model \eqref{at} since they do
not sum to zero when the atomistic model is coupled to the continuum
approximation in
the quasicontinuum energy.
The atomistic energy~\eqref{at} has the equilibrium equations
\begin{equation}\label{atom}
\begin{split}
(L^{a,h} \ub)_j
 &= \frac{- \phi''_{2 F} u_{j+2} - \phi''_{F} u_{j+1}
+ 2 (\phi''_{F} + \phi''_{2 F}) u_{j} - \phi''_{F} u_{j-1}
- \phi''_{2 F} u_{j-2}}{h^2} =  f_j, \\
&\hspace{1.9in} u_{j+2N} = u_j,
\end{split}
\end{equation}
for $-\infty<j<\infty.$

\subsection{Linearized Continuum Model}
For periodic $u \in C^\infty(\Real)$ and $u_j = u(x_j),$ the total linearized
atomistic energy
\begin{equation}\label{en2}
\E^{tot,h}(\ub) := \E^{a,h}(\ub)-\F(\ub)
\end{equation}
converges to
\begin{equation}\label{cont}
\int^1_{-1} \left[W(u'(x)) - f(x) u(x) \right]\ud x
\end{equation}
as $N \rightarrow \infty,$
where the continuum strain energy density, $W(\epsilon),$ is given by
\begin{equation}
\label{contsplit1}
\begin{split}
W(\epsilon) :=  \left[ (\phi'_F + 2 \phi'_{2F})
\epsilon
+ \half (\phi''_F + 4 \phi''_{2F})
     \epsilon^2 \right].
\end{split}
\end{equation}
 The equilibrium equations~\eqref{atom} are
a five-point consistent difference approximation of the equilibrium equation
of the continuum model~\eqref{cont}, which are
\begin{equation}
\label{bvp}
\begin{split}
- (\phi''_{F}+4\phi''_{2 F}) &u''_e = f,\\
u_e(x+2)&=u_e(x),
\end{split}
\end{equation}
for $x\in\Real.$

Quasicontinuum approximations couple an approximation
of the continuum model with the atomistic model.
The \emph{continuum approximation} consists of a finite element discretization
of the continuum model's elastic
energy.  The discretization uses a continuous, piecewise linear displacement
$\ub$ with the atom positions $\xb$ as nodes.
The external force term is applied as a point force
at each node, so that~\eqref{cont} becomes
\begin{equation}\label{cont_appx}
\sum_{l=-N+1}^{N} h [ W(Du_l) - f_l u_l].
\end{equation}
The continuum approximation has equilibrium equations
\begin{equation*}%\label{atom}
\begin{split}
&-(\phi''_{F} + 4\phi''_{2 F})\frac{ u_{l+1}
- 2 u_{l} + u_{l-1}}{h^2} =  f_l, \qquad -\infty < l < \infty,\\
&\hspace{1.in} u_{l+2N} = u_l,
\end{split}
\end{equation*}
which is a three-point consistent difference approximation of the
equilibrium equations for the continuum model~\eqref{bvp}.
In one dimension, the above is actually the standard finite difference
approximation of~\eqref{cont}; however, it is framed in finite
element terminology for flexibility in coarsening, adapativity, and
higher dimensional modelling.

\subsection{Assumptions}
\label{assumptions}
We assume that
\begin{equation}\label{posdef}
\phi''_{F} + 4 \phi''_{2 F}>0,
\end{equation}
which implies that the total linearized atomistic energy~\eqref{at} is positive
definite (up to uniform translation of the displacement).
Thus both equations~\eqref{atom} and~\eqref{bvp}
have a unique solution (up to
uniform translation) provided that
\begin{equation}\label{resultant}
\sum_{j=-N+1}^{N} f_j = 0.
\end{equation}
For simplicity, we assume in the following that
$f$ is odd in addition to being periodic, that is,
\begin{equation}\label{odd}
f(x) = -f(-x)\quad\text{and}\quad f(x+2)=f(x) \qquad \text{for }-\infty<x <\infty,
\end{equation}
which implies that $f_j:=f(x_j)$ satisfies
\begin{equation}\label{odd2}
f_j = -f_{-j}\quad\text{and}\quad f_{j+2N}=f_j \qquad \text{for }\qquad -\infty<j <\infty.
\end{equation}
We obtain a unique, odd periodic solution satisfying the mean value condition
\begin{equation}
\label{meanzero}
\sum_{j=-N+1}^{N} u_j = 0.
\end{equation}
To give nonoscillatory solutions to the
equilibrium equations~\eqref{atom} (that is, to guarantee that the roots of the
corresponding characteristic equation are real~\eqref{roots}),  we further assume that
\begin{equation}
\label{assume}
\phi''_F > 0 \text{ and } \phi''_{2F} < 0.
\end{equation}
The assumption~\eqref{assume} holds for potentials
that allow an accurate second neighbor cut-off, such as the Lennard-Jones potential~\cite{dobs08,dobs08c}.

\subsection{Energy-Based Quasicontinuum Approximation}

The energy-based quasicontinuum approximation (QCE) of $\E^{a,h}(\ub)$
decomposes the reference lattice into an atomistic
region and a coarse-grained continuum region.  It computes
a total energy by using the atomistic energy~\eqref{at} in the atomistic
region and by using the continuum approximation~\eqref{cont_appx}
to sum the energy of the continuum region.

For our analysis, we will consider an atomistic region defined by
the atoms with reference positions $x_j$ for $j=-K,\dots,K,$
and a continuum region containing the remaining atoms, $j=-N+1,\dots,-K-1$ and
$j=K+1,\dots,N.$
All atoms in the continuum region, along with the two atoms on the boundary,
$j = \pm K$ will act as nodes for the continuum approximation.
 The continuum region can be decomposed into elements
$(x_{(l-1)},x_l)$ for $l=-N+1,\dots -K$ and $l=K+1,\dots, N.$
(In general, elements can contain many atoms of the reference lattice,
but in this paper we do not consider coarsening in the continuum region.)

To construct the contribution of the atomistic region to
the total quasicontinuum energy, it is convenient to construct
an energy associated with each atom by splitting equally the energy
of each bond to obtain
\begin{equation}
\label{atomsplit}
\begin{split}
\E^{a,h}_j(\ub) :=
\frac{h}{2}  \Bigg[ \phi'_F &\left[\frac{u_{j+1} - u_{j}}{h}\right]
  + \half \phi''_{F} \left[\frac{u_{j+1} - u_{j}}{h}\right]^2 \\
&+ \phi'_{2F} \left[\frac{u_{j+2} - u_{j}}{h}\right]
  + \half \phi''_{2 F} \left[\frac{u_{j+2} - u_{j}}{h}\right]^2 \Bigg]\\
+\frac{h}{2}  \Bigg[ \phi'_F &\left[\frac{u_{j} - u_{j-1}}{h}\right]
  + \half \phi''_{F} \left[\frac{u_{j} - u_{j-1}}{h}\right]^2 \\
&+ \phi'_{2F} \left[\frac{u_{j} - u_{j-2}}{h}\right]
  + \half \phi''_{2 F} \left[\frac{u_{j} - u_{j-2}}{h}\right]^2 \Bigg].
\end{split}
\end{equation}
The continuum energy~\eqref{cont_appx} is split into energy per element
$h W(Du_l)$ where $W$ is given in~\eqref{contsplit1}, and
$h=x_l-x_{l-1}$ is the length of the continuum element
$(x_{l-1},x_l).$

To construct a quasicontinuum approximation QCE that conserves exactly
the energy of atomistic model~\eqref{at} for lattices $y^F_j$ given
by a uniform deformation gradient $F$ (see~\eqref{F})
the elements
$(x_{-K-1},\,x_{-K})$ and $(x_K,\,x_{K+1})$ on the border
of the atomistic region
should contribute only one half of the continuum energy associated with
that element.  The QCE energy is then
\begin{equation}
\label{qceTot}
\begin{split}
\E^{qce,h}(\ub) :=
 &\sum_{l = -N+1}^{-K-1} h W(Du_l)
+ \half h W(Du_{-K})
+ \sum_{j=-K}^{K}   \E_{j}^{a,h}\left(\ub\right) \\
&+ \half h W(Du_{K+1})
+ \sum_{l= K+2}^{N} h W(Du_l).
\end{split}
\end{equation}

The equilibrium equations for the total QCE energy, $\E^{qce,h}(\ub)-\F(\ub),$ then take
the form~\cite{dobs08,dobs08c}
\begin{equation}\label{qc1}
L^{qce,h} \ub_{qce}-\gb = \fb,
\end{equation}
where, for $0 \leq j \leq N,$ we have
\begin{equation*}
\label{Lqce}
\begin{split}
(L^{qce,h} \ub)_j &= \phi''_F \frac{-u_{j+1} +2 u_j - u_{j-1}}{h^2} \\
&+ \begin{cases}
\displaystyle
 4 \phi''_{2 F} \frac{-u_{j+2} +2 u_j - u_{j-2}}{4 h^2},
& 0 \leq j \leq K-2, \\[6pt]
\displaystyle
 4 \phi''_{2 F} \frac{-u_{j+2} +2 u_j - u_{j-2}}{4 h^2}
+ \frac{\phi''_{2 F}}{h} \frac{u_{j+2} - u_{j}}{2 h},  & j = K-1, \\[6pt]
\displaystyle
 4 \phi''_{2 F} \frac{-u_{j+2} +2 u_j - u_{j-2}}{4 h^2}
- \frac{2 \phi''_{2 F}}{h} \frac{u_{j+1} - u_{j}}{h}
+ \frac{\phi''_{2 F}}{h} \frac{u_{j+2} - u_{j}}{2 h},
& j = K, \\[6pt]
\displaystyle
4 \phi''_{2 F} \frac{-u_{j+1} +2 u_j - u_{j-1}}{h^2}
-  \frac{2 \phi''_{2 F}}{h} \frac{u_{j} - u_{j-1}}{h}
+ \frac{\phi''_{2 F}}{h} \frac{u_{j} - u_{j-2}}{2 h}, & j = K+1, \\[6pt]
\displaystyle
4 \phi''_{2 F} \frac{-u_{j+1} +2 u_j - u_{j-1}}{h^2}
+ \frac{\phi''_{2 F}}{h} \frac{u_{j} - u_{j-2}}{2 h}, & j = K+2, \\[6pt]
\displaystyle
4 \phi''_{2 F} \frac{-u_{j+1} +2 u_j - u_{j-1}}{h^2}, & K+3 \leq j \leq N,
\end{cases}
\end{split}
\end{equation*}
with $\gb$ given by
\begin{equation}
\label{ghost}
g_j = \begin{cases}
0, & 0 \leq j \leq K-2, \\
-\frac{1}{2h} \phi'_{2F}, & j = K-1, \\
\hphantom{-} \frac{1}{2h} \phi'_{2F}, & j = K, \\
\hphantom{-} \frac{1}{2h} \phi'_{2F}, & j = K+1, \\
-\frac{1}{2h} \phi'_{2F}, & j = K+2, \\
0, &  K+3 \leq j \leq N.\\
\end{cases}
\end{equation}
For space reasons, we only list the entries for $0\le j\le N.$  The equations
for all other $j\in\Int$ follow from symmetry and periodicity.
Due to the symmetry in the definition of the atomistic and continuum regions,
we have that $L^{qce,h}_{i,j} =  L^{qce,h}_{-i,-j}$  and
$g_{j} = -g_{-j}$ for $-N+1 \leq i,j \leq 0.$  To see this, we define
the involution operator $(S\ub)_j=-u_{-j}$ and observe that
$\E^{qce,h}(S\ub)=\E^{qce,h}(\ub).$
It then follows from the chain rule that
\[
S^TL^{qce,h} S\ub-S^T\gb - S^T\fb=L^{qce,h} \ub-\gb - \fb\quad \text{for all periodic }\ub.
\]
Since $S^T=S$ and the assumption~\eqref{odd2} is equivalent to $Sf=f,$ we can
conclude that
\begin{equation}\label{S}
SL^{qce,h} S = L^{qce,h}\quad\text{and}\quad Sg=g.
\end{equation}
Furthermore, we can conclude that the
 unique mean zero
solution~\eqref{meanzero} to
the equilibrium equations~\eqref{qc1} is odd.  This follows from $S^{-1}=S$ and
\eqref{S} which together imply that $S\ub$ is a solution
if and only if $\ub$ is.  Because $S$ preserves
the mean zero property, we conclude that $\ub_{qce}$ is odd.

\subsection{Stability of the Quasicontinuum Operator}
Our analysis of the QCE error will utilize the following stability results
for the operator $L^{qce,h}.$
\begin{lemma}
\label{qce_stab}
If $\nu:=\phi''_{F} - 5 |\phi''_{2 F}|>0,$ then
\begin{equation}\label{stab}
h\vb\cdot L^{qce,h}\vb\ge \nu \lpnorm{\Db \vb}{2}^2.
\end{equation}
\end{lemma}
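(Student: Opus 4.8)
The plan is to recognise $h\vb\cdot L^{qce,h}\vb$ as twice the quadratic (Hessian) part of the QCE energy $\E^{qce,h}$ in~\eqref{qceTot} --- the first-order $\phi'$ terms there are linear in $\vb$ and contribute nothing --- and to use that $L^{qce,h}$ splits additively into a nearest-neighbour piece carrying $\phi''_F$ and a second-neighbour piece carrying $\phi''_{2F}$, as the case formula makes evident. Carrying this out gives the identity
\begin{equation*}
h\vb\cdot L^{qce,h}\vb \;=\; \phi''_F\,\lpnorm{\Db\vb}{2}^2 \;+\; 2\phi''_{2F}\,h\,Q(\vb),
\end{equation*}
where the first term is exact: the nearest-neighbour contributions of the atom-based splitting~\eqref{atomsplit} and of the continuum elements~\eqref{cont_appx}, with the two interface half-elements in~\eqref{qceTot} (present for exactly this reason), reassemble the complete second difference $\phi''_F(-v_{j+1}+2v_j-v_{j-1})/h^2$ at every node. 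The remainder $Q(\vb)\ge0$ is the explicit nonnegative sum of squares collecting all second-neighbour terms: $2(Dv_l)^2$ from each full continuum element, $(Dv_l)^2$ from each of the half-elements $l=-K,\,K+1$, and $\quarter(Dv_k+Dv_{k-1})^2$ from each squared second-neighbour difference quotient $(v_k-v_{k-2})/h$ occurring in $\sum_{j=-K}^{K}\E^{a,h}_j$ (for the two shifted ranges of $k$ the forward and backward terms of~\eqref{atomsplit} produce).

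Since $\phi''_{2F}<0$ by~\eqref{assume}, the asserted constant $\nu=\phi''_F-5|\phi''_{2F}|$ is equivalent to the purely algebraic inequality $2hQ(\vb)\le 5\lpnorm{\Db\vb}{2}^2$. To establish it I would apply the elementary bound $(Dv_k+Dv_{k-1})^2\le 2(Dv_k)^2+2(Dv_{k-1})^2$ to every second-neighbour square in $Q$ and then, index by index, add up the total coefficient $c_j$ of $(Dv_j)^2$. Away from the coupling interface one finds $c_j=2$ (a single full continuum element; or, in the atomistic interior, the two overlapping shifted sums of squared second differences, which together contribute $\half+\half+\half+\half=2$); a short case count at the interface gives $c_{-K}=c_{K+1}=2$, $c_{-K+1}=c_{K}=\frac{3}{2}$, and the maximum $c_{-K-1}=c_{K+2}=2+\half=\frac{5}{2}$, attained at the two atoms just outside the atomistic region. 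Hence $Q(\vb)\le\frac{5}{2}\sum_{j=-N+1}^{N}(Dv_j)^2$, i.e. $2hQ(\vb)\le 5\lpnorm{\Db\vb}{2}^2$, and substituting into the identity above yields $h\vb\cdot L^{qce,h}\vb\ge(\phi''_F-5|\phi''_{2F}|)\lpnorm{\Db\vb}{2}^2=\nu\lpnorm{\Db\vb}{2}^2$.

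The only delicate point is the interface bookkeeping in the last step. One must respect the convention in~\eqref{atomsplit} that an interface second-neighbour bond such as $(K-1,K+1)$ or $(K,K+2)$ enters $\sum_{j=-K}^{K}\E^{a,h}_j$ only through the half attributed to the atom inside the atomistic region, the matching continuum elements supplying the rest, so that every second-neighbour interaction carries total weight one and none is dropped --- only then do the $c_j$ come out as above. One then checks that the crude step $(a+b)^2\le 2a^2+2b^2$, which is an equality when $a=b$ and so is harmless on the slowly varying fields on which the scheme is consistent, inflates the ``ideal'' coefficient $4|\phi''_{2F}|$ to $5|\phi''_{2F}|$ only at those two interface atoms. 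I expect this to be routine but slightly tedious; everything else is the one-line identity plus a single application of $(a+b)^2\le 2(a^2+b^2)$.
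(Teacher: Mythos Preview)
Your proposal is correct and follows essentially the same route as the paper: identify $h\vb\cdot L^{qce,h}\vb$ with twice the quadratic part of the QCE energy~\eqref{qceTot}, observe that the nearest-neighbour $\phi''_F$ contributions reassemble exactly into $\phi''_F\lpnorm{\Db\vb}{2}^2$, bound each atomistic second-neighbour square via $(a+b)^2\le 2(a^2+b^2)$, and count the resulting coefficient of $(Dv_k)^2$, finding the maximum $5$ (your $c_k=5/2$ in the $Q$-normalisation) at the indices $K+2$ and $-K-1$. The only difference is cosmetic: you isolate the second-neighbour part as a named form $Q(\vb)$ and tabulate the interface coefficients explicitly, whereas the paper writes the same inequality chain directly without naming $Q$.
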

\begin{proof}
The stability result~\eqref{stab} follows from the identity
\begin{equation*}
\begin{split}
\half h \vb\cdot L^{qce,h}\vb=
 & \sum_{l = -N+1}^{-K-1} h \widehat{W}(Dv_l)
+ \half  h \widehat{W}(Dv_{-K})
+  \sum_{j=-K}^{K}   \widehat{\E}_{j}^{a,h}\left(\vb\right) \\
&+ \half h \widehat{W}(Dv_{K+1})
+ \sum_{l= K+2}^{N} h \widehat{W}(Dv_l),
\end{split}
\end{equation*}
where
\begin{equation}
\label{atomsplit2}
\begin{split}
\hat\E^{a,h}_j(\vb) &:=
\frac{h}{2} \Bigg[ \half \phi''_{F} \left[\frac{v_{j+1} - v_{j}}{h}\right]^2
  + \half \phi''_{2 F} \left[\frac{v_{j+2} - v_{j}}{h}\right]^2 \Bigg]\\
&\qquad+\frac{h}{2} \Bigg[
   \half \phi''_{F} \left[\frac{v_{j} - v_{j-1}}{h}\right]^2
  + \half \phi''_{2 F} \left[\frac{v_{j} - v_{j-2}}{h}\right]^2 \Bigg]
\end{split}
\end{equation}
and
\begin{equation}
\label{contsplit2}
\widehat{W}(\epsilon) :=
 \half (\phi''_F + 4 \phi''_{2F}) \epsilon^2.
 \end{equation}

We then have that
\begin{align*}
h\vb\cdot L^{qce,h}\vb &\geq h \sum_{j=-N+1}^N \half \phi''_F \left[(Dv_{j+1})^2
+ (Dv_{j})^2\right]
- h \sum_{j = -N+1}^{-K-1}  4|\phi''_{2F}|\left(Dv_{j}\right)^2 \\
&\qquad - 2 h |\phi''_{2F}| (Dv_{-K})^2
 - h \sum_{j=-K}^{K}   |\phi''_{2F}| \left[(Dv_{j+2})^2+(Dv_{j+1})^2
                                       +(Dv_{j})^2+(Dv_{j-1})^2\right] \\
&\qquad - 2 h |\phi''_{2F}| (Dv_{K+1})^2 - h \sum_{j= K+2}^{N} 4|\phi''_{2F}|
                             \left(Dv_{j}\right)^2 \\
&\geq (\phi''_F - 5 |\phi''_{2F}|)
\left[ h  \sum_{j=-N+1}^N  (Dv_j)^2 \right]. \qedhere
\end{align*}
\end{proof}

The preceding stability Lemma~\ref{qce_stab} and the discrete
Poincar\'e inequality,
\begin{equation}\label{poincare}
\lpnorm{\vb}{2} \le \frac h{2\sin \frac{\pi h}2}\lpnorm{\Db \vb}{2}
\le \frac 12 \lpnorm{\Db \vb}{2} \quad \text{if }
\sum_{j=-N+1}^N v_j=0
\end{equation}
for $0<h\le 1,$
 give the following stability
result in the $\lpnorm{\cdot}{2}$ norm.  The proof of \eqref{poincare}
follows from verifying that
$(2\sin \frac{\pi h}2)/h$ is the smallest eigenvalue of $D^TD.$
\begin{lemma}
\label{qce_2}
If $\nu := \phi''_F - 5 |\phi''_{2F}|$ and
\begin{equation}\label{2}
L^{qce,h} \vb = \bb,
\end{equation}
where $\sum^N_{j=-N+1} b_j = 0,$
then
\begin{equation}\label{22}
\lpnorm{D \vb}{2} \leq \frac{1}{2\nu} \lpnorm{\bb}{2}.
\end{equation}
\end{lemma}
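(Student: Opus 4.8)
The plan is to combine the energy-coercivity estimate of Lemma~\ref{qce_stab} with the discrete Poincar\'e inequality~\eqref{poincare} in a standard way. We are given that $L^{qce,h}\vb=\bb$ with $\sum_{j=-N+1}^N b_j=0$, and we want to bound $\lpnorm{D\vb}{2}$ by $\tfrac{1}{2\nu}\lpnorm{\bb}{2}$. Since the equilibrium operator annihilates constants and $\bb$ has zero mean, we may assume without loss of generality that $\vb$ has zero mean (adding a constant to $\vb$ changes neither $D\vb$ nor the equation), so that the Poincar\'e inequality~\eqref{poincare} applies to $\vb$.

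First I would take the $\ell^2_h$ inner product of~\eqref{2} with $\vb$, i.e.\ multiply by $h$ and sum over a period, to get $h\,\vb\cdot L^{qce,h}\vb = h\,\vb\cdot\bb$. The left-hand side is bounded below by $\nu\lpnorm{D\vb}{2}^2$ by Lemma~\ref{qce_stab}. For the right-hand side I would apply the discrete Cauchy--Schwarz inequality, $h\,\vb\cdot\bb \le \lpnorm{\vb}{2}\lpnorm{\bb}{2}$, and then the Poincar\'e inequality~\eqref{poincare} to replace $\lpnorm{\vb}{2}$ by $\tfrac12\lpnorm{D\vb}{2}$. Chaining these gives
\begin{equation*}
\nu\lpnorm{D\vb}{2}^2 \le h\,\vb\cdot L^{qce,h}\vb = h\,\vb\cdot\bb \le \lpnorm{\vb}{2}\lpnorm{\bb}{2} \le \tfrac12\lpnorm{D\vb}{2}\lpnorm{\bb}{2}.
\end{equation*}

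Finally, dividing through by $\nu\lpnorm{D\vb}{2}$ (the case $\lpnorm{D\vb}{2}=0$ being trivial, as then $\vb$ is constant and hence zero by the zero-mean normalization, and the inequality~\eqref{22} holds) yields $\lpnorm{D\vb}{2}\le \tfrac{1}{2\nu}\lpnorm{\bb}{2}$, which is~\eqref{22}. There is no real obstacle here; the only point requiring a word of care is the reduction to the zero-mean case so that~\eqref{poincare} is applicable, and the observation that both sides of~\eqref{2} are unchanged (resp.\ well-defined up to constants) under this normalization because $L^{qce,h}$ annihilates constant displacements and $\bb$ has vanishing sum.
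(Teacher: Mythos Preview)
Your proof is correct and follows essentially the same approach as the paper, which simply says the result follows from taking the inner product of~\eqref{2} with $\vb$ and then applying Lemma~\ref{qce_stab} and the Poincar\'e inequality~\eqref{poincare}. You have filled in the details (Cauchy--Schwarz, the zero-mean normalization so that~\eqref{poincare} applies, and the cancellation of one factor of $\lpnorm{D\vb}{2}$) that the paper leaves implicit.
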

\begin{proof}
The result~\eqref{22} follows from taking the inner product of~\eqref{2} with
$\vb$ and then using the positive definiteness inequality~\eqref{stab} and
the Poincar\'e inequality \eqref{poincare}.
\end{proof}

\subsection{Quasi-nonlocal Quasicontinuum Approximation}
The quasi-nonlocal quasicontinuum approximation (QNL) is  similar
to the QCE approximation, but it modifies the interactions around the interface
in order to remove $\gb$ from the elastic force.  The quasi-nonlocal
atoms $\pm K, \pm (K+1)$ interact directly with any atoms in the
atomistic region within the next
nearest neighbor cut-off, but interact as in the continuum region with
other all other atoms.   That is, unlike the atomistic model and continuum
approximation, the form of energy contributions for quasi-nonlocal atoms
depends on the type (atomstic, continuum, or quasi-nonlocal) of the
neighboring atoms.  For example, the energy
contribution for $j=K$ is
\begin{equation*}
\begin{split}
\E^{q,h}_K(\ub) :=
&\frac{h}{2} \left[ (\phi'_F + 2 \phi'_{2F})
\left[\frac{u_{K+1} - u_{K}}{h}\right]
+ \half (\phi''_F + 4 \phi''_{2F}) \left[\frac{u_{K+1} - u_{K}}{h}\right]^2 \right]\\
&+\frac{h}{2} \Bigg[ \phi'_F \left[\frac{u_{K} - u_{K-1}}{h}\right]
  + \half \phi''_{F} \left[\frac{u_{K} - u_{K-1}}{h}\right]^2 \\
& \qquad \quad + \phi'_{2F} \left[\frac{u_{K} - u_{K-2}}{h}\right]
  + \half \phi''_{2 F} \left[\frac{u_{K} - u_{K-2}}{h}\right]^2 \Bigg]
\end{split}
\end{equation*}
and the energy contribution for $j=K+1$ is
\begin{equation*}
\begin{split}
\E^{q,h}_{K+1}(\ub) :=
&\frac{h}{2} \left[ (\phi'_F + 2 \phi'_{2F})
\left[\frac{u_{K+2} - u_{K+1}}{h}\right]
+ \half (\phi''_F + 4 \phi''_{2F}) \left[\frac{u_{K+2} - u_{K+1}}{h}\right]^2 \right]\\
&+\frac{h}{2} \Bigg[ \phi'_F \left[\frac{u_{K+1} - u_{K}}{h}\right]
  + \half \phi''_{F} \left[\frac{u_{K+1} - u_{K}}{h}\right]^2 \\
& \qquad \quad + \phi'_{2F} \left[\frac{u_{K+1} - u_{K-1}}{h}\right]
  + \half \phi''_{2 F} \left[\frac{u_{K+1} - u_{K-1}}{h}\right]^2 \Bigg].
\end{split}
\end{equation*}
The QNL energy is then
\begin{equation}
\label{qcnTot}
\begin{split}
\E^{qnl,h}(\ub) :=
 &\sum_{l = -N+1}^{-K-2} h W(Du_l)
+ \half h W(Du_{-K-1}) + \sum_{j=-K-1}^{-K}   \E_{j}^{q,h}\left(\ub\right)
+ \sum_{j=-K+1}^{K-1}   \E_{j}^{a,h}\left(\ub\right) \\
&+ \sum_{j=K}^{K+1}   \E_{j}^{q,h}\left(\ub\right)+ \half h W(Du_{K+2})
+ \sum_{l= K+3}^{N} h W(Du_l).
\end{split}
\end{equation}

The QNL equilibrium equations are
\begin{equation*}
L^{qnl,h} \ub_{qnl} = \fb,
\end{equation*}
where
\begin{equation*}
\label{Lqnl}
\begin{split}
(L^{qnl,h} \ub)_j &= \phi''_F \frac{-u_{j+1} +2 u_j - u_{j-1}}{h^2} \\
&+ \begin{cases}
\displaystyle
 4 \phi''_{2 F} \frac{-u_{j+2} +2 u_j - u_{j-2}}{4 h^2},
& 0 \leq j \leq K-1, \\[6pt]
\displaystyle
 4 \phi''_{2 F} \frac{-u_{j+2} +2 u_j - u_{j-2}}{4 h^2}
- \phi''_{2 F} \frac{-u_{j+2} + 2 u_{j+1} - u_{j}}{h^2},
& j = K, \\[6pt]
\displaystyle
4 \phi''_{2 F} \frac{-u_{j+1} +2 u_j - u_{j-1}}{h^2}
+ \phi''_{2 F} \frac{-u_{j} + 2 u_{j-1} - u_{j-2}}{h^2},
& j = K+1, \\[6pt]
\displaystyle
4 \phi''_{2 F} \frac{-u_{j+1} +2 u_j - u_{j-1}}{h^2}, & K+2 \leq j \leq N.
\end{cases}
\end{split}
\end{equation*}
We note that the QNL energy satisfies the symmetry condition
$\E^{qnl,h}(S\ub)=\E^{qnl,h}(\ub),$ so the QNL operator $L^{qnl,h}$
is defined for $j<0$ by the identity
$SL^{qnl,h}S=L^{qnl,h}.$
While we have successfully removed the ghost force terms $\gb,$ QNL is also
not a consistent approximation of the continuum equations~\eqref{bvp}
at the interfacial atoms, such as $j=K$ and $j=K+1$ above.  We will give
a more detailed analysis of the approximation at the interface in
Section~\ref{sec:qnl}.

Our analysis of the QNL error will utilize the following stability result
for the operator $L^{qnl,h}.$
\begin{lemma}\label{qnl_stab}
If $1\le p\le \infty,$ $\nu:=\phi''_{F} - 4 |\phi''_{2 F}|>0,$ and
\begin{equation*}
L^{qnl,h} \vb = \bb
\end{equation*}
where $\sum_{j=-N+1}^N b_j = 0,$
then
\begin{equation}\label{stabqnl}
\begin{split}
h\vb\cdot L^{qnl,h}\vb &\ge \nu \lpnorm{D\vb}{2}^2, \\
\lpnorm{D \vb}{2}  &\leq \frac{1}{2\nu} \lpnorm{\bb}{2}.%, \\
\end{split}
\end{equation}
\end{lemma}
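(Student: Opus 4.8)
The plan is to follow the proof of Lemma~\ref{qce_stab}, substituting the quasi-nonlocal interface structure for the QCE one. First I would establish the energy identity
\begin{align*}
\half h\,\vb\cdot L^{qnl,h}\vb &=
\sum_{l=-N+1}^{-K-2} h\widehat W(Dv_l) + \half h\widehat W(Dv_{-K-1})
+ \sum_{j=-K-1}^{-K}\widehat\E^{q,h}_j(\vb)
+ \sum_{j=-K+1}^{K-1}\widehat\E^{a,h}_j(\vb) \\
&\quad + \sum_{j=K}^{K+1}\widehat\E^{q,h}_j(\vb)
+ \half h\widehat W(Dv_{K+2})
+ \sum_{l=K+3}^{N} h\widehat W(Dv_l),
\end{align*}
where $\widehat W$ is defined in~\eqref{contsplit2}, $\widehat\E^{a,h}_j$ in~\eqref{atomsplit2}, and $\widehat\E^{q,h}_j$ denotes the quadratic part of the quasi-nonlocal atom energy $\E^{q,h}_j$ (obtained by discarding its first-order terms). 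This identity is the QNL analogue of the one in the proof of Lemma~\ref{qce_stab}: since $L^{qnl,h}$ is the Hessian of the QNL energy~\eqref{qcnTot} and the first-order terms are linear in $\ub$, they do not enter the quadratic form, which is obtained by replacing every $W$, $\E^{a,h}_j$, $\E^{q,h}_j$ in~\eqref{qcnTot} by its quadratic part. Because $S L^{qnl,h} S = L^{qnl,h}$, it suffices to track coefficients for $0\le j\le N$.

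Second, for the coercivity bound I would write every second-neighbor square difference as $\bigl(\tfrac{v_{j+2}-v_j}{h}\bigr)^2 = (Dv_{j+1}+Dv_{j+2})^2 \le 2(Dv_{j+1})^2 + 2(Dv_{j+2})^2$ and substitute this, using $\phi''_{2F}=-|\phi''_{2F}|<0$ from~\eqref{assume}, into each such term appearing in $\widehat\E^{a,h}_j$ and $\widehat\E^{q,h}_j$ above. After multiplying the identity by $2$, this bounds $h\,\vb\cdot L^{qnl,h}\vb$ below by a sum $\sum_j c_j (Dv_j)^2\,h$, and the content of the argument is that $c_j \ge \phi''_F - 4|\phi''_{2F}| = \nu$ for every bond. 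In the bulk continuum region one has $c_j = \phi''_F + 4\phi''_{2F} = \nu$ exactly; in the bulk atomistic region the factor-two split yields $c_j \ge \phi''_F - 4|\phi''_{2F}| = \nu$; and near the interface the key point is that each quasi-nonlocal atom energy $\E^{q,h}_{\pm K},\E^{q,h}_{\pm(K+1)}$ treats its ``outward'' bond through a single continuum-type term $\half(\phi''_F+4\phi''_{2F})(\,\cdot\,)^2$ rather than through separate $\phi''_F$ and $\phi''_{2F}$ contributions, which is precisely what keeps the second-neighbor deficit at $4|\phi''_{2F}|$ instead of the $5|\phi''_{2F}|$ of the QCE count in Lemma~\ref{qce_stab}. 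This gives $h\,\vb\cdot L^{qnl,h}\vb \ge \nu\lpnorm{\Db\vb}{2}^2$, the first line of~\eqref{stabqnl}.

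Third, the $\ell^2_h$ estimate follows as in Lemma~\ref{qce_2}. Replacing $\vb$ by its mean-zero representative (legitimate since $D$ annihilates constants, and $\sum b_j=0$ makes the equation solvable), I would take the inner product of $L^{qnl,h}\vb=\bb$ with $\vb$, apply the coercivity bound just proved, the Cauchy--Schwarz inequality $h\,\vb\cdot\bb \le \lpnorm{\vb}{2}\lpnorm{\bb}{2}$, and the discrete Poincar\'e inequality~\eqref{poincare} in the form $\lpnorm{\vb}{2}\le\half\lpnorm{\Db\vb}{2}$, obtaining $\nu\lpnorm{\Db\vb}{2}^2 \le \half\lpnorm{\Db\vb}{2}\lpnorm{\bb}{2}$ and hence the second line of~\eqref{stabqnl}.

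I expect the interface bookkeeping in the second step to be the main obstacle. Unlike in the bulk, the energy contribution of an atom near the interface depends on the types of its neighbors, so one must carefully enumerate the nearest- and second-neighbor bonds of the quasi-nonlocal atoms $\pm K,\pm(K+1)$ and of the two adjacent half-continuum elements, and verify that the positive nearest-neighbor weight assembled there absorbs the factor-two-inflated negative second-neighbor weight with a deficit of only $4|\phi''_{2F}|$ per bond.
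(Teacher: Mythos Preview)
Your proposal is correct and follows exactly the route the paper indicates: the paper's own proof consists of a single sentence saying that the argument follows the proofs of Lemmas~\ref{qce_stab} and~\ref{qce_2} with the appropriate modification, and what you have written is precisely that modification spelled out in detail, including the correct explanation of why the QNL interface bookkeeping yields a deficit of $4|\phi''_{2F}|$ rather than $5|\phi''_{2F}|$.
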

\begin{proof}
The proof of the stability result~\eqref{stabqnl} follows the proof of
the stability results for the QCE approximation in Lemmas \ref{qce_stab}
and \ref{qce_2} with the appropriate
modification.
\end{proof}

\begin{remark}
The basic formulation of the QNL method removes the ghost force terms only for
second-neighbor interactions in the 1D case.  A longer-range matching method is
proposed in~\cite{e06} that removes ghost forces for longer-range interactions
by extending the region near the interface which have special energies.

In 2D and 3D, there are similar restrictions on the interaction length that QNL
corrects.  Additional ghost forces arise when the quasicontinuum energy
is extended to allow coarsening in the continuum region~\cite{e06}.
\end{remark}

\section{Convergence of the Energy-Based Quasicontinuum Solution}
\label{sec:comp}
We now analyze the quasicontinuum error
and obtain estimates for its convergence rate
by splitting the residual into two parts.
One portion contains the low order terms, has support
only near the atomistic to continuum interface, and is
oscillatory.  The remainder is higher order, and its
influence will be bounded using the stability results.
We recall that the QCE solution, $\ub_{qce},$ is an odd, periodic
solution of
\begin{equation}\label{recall}
\begin{split}
L^{qce,h} \ub_{qce} =\gb + \fb,\\
\end{split}
\end{equation}
and the continuum model solution is an odd,
periodic function $u_e(x)$ satisfying
\begin{equation}\label{exact2}
\begin{split}
-(\phi''_F + 4 \phi''_{2F}) u''_e = f,\\
\end{split}
\end{equation}
Let $\ub_e$ denote the vector satisfying $u_j = u_e(x_j).$  We will now
derive estimates for the quasicontinuum error $\eb=\ub_e-\ub_{qce}.$

It follows from the QCE equilibrium equation~\eqref{recall} that
\begin{equation}\label{residual}
L^{qce,h}\eb=L^{qce,h} \ub_{e}-L^{qce,h} \ub_{qce}=L^{qce,h} \ub_{e} - \gb -\fb.
\end{equation}
We split the residual $L^{qce,h}\eb$ as
\begin{equation}\label{split}
L^{qce,h} \eb  := \rhob + \sigb,
\end{equation}
where $\rhob$ contains the three lowest-order residual error terms in the interface,
\begin{equation}\label{rhodef}
\begin{split}
\rhob = \begin{cases}
0, & 0 \leq j \leq K-2, \\[3pt]
\ \ \ \left(\frac{1}{2} \phi'_{2F} + \phi''_{2F} u'_{K+1/2} \right)\frac{1}{h}
- \frac{1}{2} \phi''_{2F} u''_{K+1/2},
+ \frac{7}{24} \phi''_{2F} u'''_{K+1/2} h,
& j = K-1, \\[3pt]
-   \left(\frac{1}{2} \phi'_{2F} + \phi''_{2F} u'_{K+1/2} \right)\frac{1}{h}
+ \frac{1}{2} \phi''_{2F} u''_{K+1/2},
+ \frac{5}{24} \phi''_{2F} u'''_{K+1/2} h,
& j = K, \\[3pt]
-   \left(\frac{1}{2} \phi'_{2F} + \phi''_{2F} u'_{K+1/2} \right)\frac{1}{h}
- \frac{1}{2} \phi''_{2F} u''_{K+1/2},
+ \frac{5}{24} \phi''_{2F} u'''_{K+1/2} h,
& j = K+1, \\[3pt]
\ \ \ \left(\frac{1}{2} \phi'_{2F} + \phi''_{2F} u'_{K+1/2} \right)\frac{1}{h}
+ \frac{1}{2} \phi''_{2F} u''_{K+1/2},
+ \frac{7}{24} \phi''_{2F} u'''_{K+1/2} h,
& j = K+2, \\[3pt]
0, & K+3 \leq j \leq N,
\end{cases}
\end{split}
\end{equation}
and $\rho_j = - \rho_{-j}.$
Although $\rho_j=\text{O}(1/h)$ in the interface
$j=K-1,\dots,K+2,$  we will prove that the effect of $\rhob$ on the
error is small away from the interface because it oscillates and the
lowest order terms cancel
in the sum
\begin{equation}\label{osc}
\Delta \rhob := \sum_{j=K-1}^{K+2} \rho_j=h\phi''_{2F} u'''_{K+1/2}.
\end{equation}
The residual term $\rhob$ represents the inconsistency of the operator
$L^{qce,h}$ as a second-order finite difference approximation of the
differential equation~\eqref{exact2}.  This inconsistency is located
only in the interface because the models themselves are second-order
approximations.

The residual term $\sigb$ accounts for the error in approximating the
continuum model~\eqref{bvp} by a second-order finite difference approximation.
We can estimate the residual $\sigb$
from Taylor's Theorem to obtain
\begin{align}
\lpnorm{{\sigb}}{2}\le Ch^2 \Lpnorm{u_e^{(4)}}{2} \label{second}.
\end{align}

Note that since $\ub_e,\ \fb,$ and $\gb$ are odd, and $\rhob$ was constructed to
be odd, then $\sigb$ is
odd as well.  Therefore, we can split the error $\eb$ as
\begin{equation*}
\eb= \eb_{\rhob} +\eb_{\sigb}
\end{equation*}
such that
\begin{equation}
\label{eqsplit}
\begin{split}
L^{qce,h} \eb_{\rhob}&=\rhob,\qquad  e_{\rho,j}=-e_{\rho,-j},\\
L^{qce,h} \eb_{\sigb}&=\sigb, \qquad  e_{\sigma,j}=-e_{\sigma,-j}.
\end{split}
\end{equation}

\subsection{Global Discretization Error, $\eb_\sigb$}
We now have by the stability~\eqref{stab} of $L^{qce,h}$ and the estimate
of the residual~\eqref{second} that
\begin{equation}\label{sig}
\begin{split}
 \lpnorm{D\eb_{\sigb}}{2} &\le Ch^2 \Lpnorm{u_e^{(4)}}{2}.
\end{split}
\end{equation}
We can extend the bound to
\begin{lemma}
\label{esigbound}
For $\eb_\sigb$ defined in~\eqref{eqsplit}, we have
\begin{equation}\label{siginfty}
\begin{split}
\lpnorm{\eb_{\sigb}}{\infty} &\leq \sqrt{2} \lpnorm{D \eb_{\sigb}}{2}
\le Ch^2 \Lpnorm{u_e^{(4)}}{2},
\end{split}
\end{equation}
\begin{equation}
\label{invass}
\lpnorm{D \eb_{\sigb}}{p} \leq
\begin{cases}
Ch^{2} \Lpnorm{u_e^{(4)}}{2},
& 1 \leq p \leq 2, \\
Ch^{\frac{3}{2} + \frac{1}{p}} \Lpnorm{u_e^{(4)}}{2},
& 2 \leq p \leq \infty.
\end{cases}
\end{equation}
\end{lemma}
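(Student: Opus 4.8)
The plan is to derive both estimates in the lemma from the $\ell^2$ bound \eqref{sig} on $\lpnorm{D\eb_{\sigb}}{2}$ together with a short list of standard discrete functional inequalities, using in an essential way that $\eb_{\sigb}$ is odd and $2N$-periodic. In particular oddness and periodicity give $e_{\sigma,0}=e_{\sigma,N}=0$ and $\sum_{j=-N+1}^{N} e_{\sigma,j}=0$; such a normalization is necessary, since constants lie in the kernel of $D$ and no inequality of the form $\lpnorm{\vb}{\infty}\le C\lpnorm{D\vb}{2}$ can hold without it.

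For the first inequality in \eqref{siginfty}, I would establish the discrete Sobolev (Agmon-type) inequality $\lpnorm{\vb}{\infty}\le\sqrt{2}\,\lpnorm{D\vb}{2}$ for odd periodic $\vb$. Since $e_{\sigma,0}=0$, telescoping gives $e_{\sigma,j}=h\sum_{i=1}^{j}(D\eb_{\sigb})_i$ for $0\le j\le N$, and Cauchy--Schwarz together with $jh\le Nh=1$ yields $|e_{\sigma,j}|\le (jh)^{1/2}\,\lpnorm{D\eb_{\sigb}}{2}\le\lpnorm{D\eb_{\sigb}}{2}$; oddness and $2N$-periodicity then cover the remaining indices. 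Chaining with \eqref{sig} gives the $O(h^2)$ bound on $\lpnorm{\eb_{\sigb}}{\infty}$.

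For \eqref{invass} I would split on the range of $p$. When $1\le p\le 2$, Hölder's inequality over a single period (total length $2Nh=2$) gives $\lpnorm{\wb}{p}\le 2^{1/p-1/2}\lpnorm{\wb}{2}$, so $\lpnorm{D\eb_{\sigb}}{p}\le C\lpnorm{D\eb_{\sigb}}{2}\le Ch^2\Lpnorm{u_e^{(4)}}{2}$ by \eqref{sig}. When $2\le p\le\infty$, I would first apply the inverse inequality $\lpnorm{\wb}{\infty}\le h^{-1/2}\lpnorm{\wb}{2}$ (immediate from $\lpnorm{\wb}{2}^2=h\sum_j|w_j|^2\ge h\,\lpnorm{\wb}{\infty}^2$) with $\wb=D\eb_{\sigb}$, obtaining $\lpnorm{D\eb_{\sigb}}{\infty}\le Ch^{3/2}\Lpnorm{u_e^{(4)}}{2}$, and then interpolate via $\lpnorm{\wb}{p}\le\lpnorm{\wb}{\infty}^{1-2/p}\lpnorm{\wb}{2}^{2/p}$ (which follows from $h\sum_j|w_j|^p\le\lpnorm{\wb}{\infty}^{p-2}\,h\sum_j|w_j|^2$). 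This produces $\lpnorm{D\eb_{\sigb}}{p}\le C h^{\frac{3}{2}(1-2/p)+2(2/p)}\Lpnorm{u_e^{(4)}}{2}=Ch^{\frac{3}{2}+\frac{1}{p}}\Lpnorm{u_e^{(4)}}{2}$, which matches the claimed exponent and is consistent with the first case at $p=2$.

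There is no genuine obstacle here: the whole argument is classical discrete functional analysis layered on the already-proved estimate \eqref{sig}. The two points that need attention are (i) making sure the oddness/zero-mean normalization is invoked so that the discrete Sobolev and Poincaré-type inequalities are applicable, and (ii) observing that passing from the $\ell^2$ rate $h^2$ to an $\ell^\infty$ rate unavoidably costs the factor $h^{-1/2}$ coming from the inverse inequality — this is exactly why \eqref{invass} degrades to $h^{3/2+1/p}$ for $p>2$ rather than remaining $O(h^2)$, and one should note that this reduced rate still suffices for the optimal-order $w^{1,p}$ conclusions announced in the introduction.
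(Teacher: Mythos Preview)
Your proposal is correct and follows essentially the same route as the paper: telescoping from $e_{\sigma,0}=0$ (using oddness) to bound $\lpnorm{\eb_\sigb}{\infty}$ by $\lpnorm{D\eb_\sigb}{2}$, then for \eqref{invass} the H\"older embedding for $1\le p\le 2$ and the inverse estimate combined with $\ell^2$--$\ell^\infty$ interpolation for $p\ge 2$. The only cosmetic difference is that the paper routes the first bound through $\lpnorm{D\eb_\sigb}{1}$ before applying H\"older to reach the constant $\sqrt{2}$, whereas your direct Cauchy--Schwarz on the telescoping sum (using $jh\le 1$) in fact yields the sharper constant $1$.
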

\begin{proof}
We obtain the
Poincar\'e inequality~\cite{brenner}
\begin{equation}\label{poincare3}
\lpnorm{\vb}{\infty} \le \lpnorm{\Db \vb}{1} \le  \sqrt{2} \lpnorm{D \vb}{2}
\end{equation}
for all odd periodic $\vb$ from the identity
\begin{equation*}
v_j=
\begin{cases}
\sum_{\ell=1}^j h (Dv_\ell)&\text{if }j>0,\\
-\sum^{0}_{\ell=j-1}h (Dv_\ell)&\text{if }j<0,
\end{cases}
\end{equation*}
which gives the first inequality in~\eqref{poincare3}.
The second follows from H\"older's inequality.  We can then
obtain the error estimate~\eqref{siginfty} for
$\lpnorm{\eb_{\sigb}}{\infty}$
from the
Poincar\'e inequality~\eqref{poincare3} and the
bound~\eqref{sig}.

The ``inverse'' estimate~\cite{brenner}
\begin{equation}\label{inverse}
\lpnorm{ D\vb}{\infty}\le h^{-1/2} \lpnorm{ D\vb}{2}
\end{equation}
for all periodic $\vb,$ and the H{\"o}lder estimates~\cite{rudin}
\begin{equation}\label{holder}
\lpnorm{ D\vb}{p}\le
\begin{cases}
2^{\frac{2-p}{2p}}\lpnorm{ D\vb}{2},\qquad &1 \leq p\leq 2,\\
\lpnorm{ D\vb}{2}^{2/p}\lpnorm{ D\vb}{\infty}^{1-2/p},
\qquad &2 \leq p \leq \infty,
\end{cases}
\end{equation}
combine to prove~\eqref{invass} by taking $\vb=\eb_\sigb.$
\end{proof}

\subsection{Interfacial coupling error, $\eb_{\rhob}$}
In the following, we will bound the error, $\eb_{\rhob},$ by constructing and
estimating an explicit odd solution of
\begin{equation}\label{rhoeq}
L^{qce,h} \eb_{\rhob} = \rhob.
\end{equation}

Since $\rho_j$ is zero for all $j$ except $j = \pm\{K-1,K,K+1,K+2\},$
$\eb_{\rhob}$ satisfies a second-order, homogeneous recurrence relation
in the interior of the continuum region and a fourth-order, homogeneous recurrence relation
in the interior of the atomistic region.  Therefore, $e_{\rho,j}$ is
linear for $j \geq K+3$ or $j \leq -K-3,$ and it is the sum of a
linear solution and exponential solution for $-K+2 \leq j \leq K-2.$
The coefficients for these solutions are determined by the equations
in the atomistic to continuum interface.

The homogeneous atomistic difference scheme
\begin{equation}\label{ato}
- \phi''_{2F}  u_{j+2} - \phi''_F  u_{j+1} + (2 \phi''_F + 2
\phi''_{2F})  u_{j} - \phi''_F  u_{j-1} - \phi''_{2F}  u_{j-2}
= 0
\end{equation}
has characteristic equation
\begin{equation*}
- \phi''_{2F} \Lambda^2 - \phi''_F \Lambda + (2 \phi''_F + 2 \phi''_{2F})
- \phi''_F \Lambda^{-1} - \phi''_{2F} \Lambda^{-2}
= 0,
\end{equation*}
with roots
\begin{equation}\label{roots}
1, 1, \lambda, \frac{1}{\lambda},
\end{equation}
where
\begin{equation*}
\lambda=\frac{(\phi''_F + 2 \phi''_{2F})
    + \sqrt{(\phi''_F)^2 + 4 \phi''_F \phi''_{2F}}}{-2 \phi''_{2F}}.
\end{equation*}
Based on the assumptions on $\phi$ in~\eqref{posdef}and~\eqref{assume}
and
we have that $\lambda > 1.$  We note that if $\phi''_{2F}$ were positive
contrary to
assumption ~\eqref{assume},
then $\lambda$ would
be negative which would give an oscillatory error in the atomistic region.
General solutions of the homogeneous atomistic
equations ~\eqref{ato}
have the form $ u_{j} = C_1 + C_2 h j + C_3 \lambda^j + C_4
\lambda^{-j},$ but seeking an odd solution reduces this to the form
$ u_{j} = C_2 hj + C_3 (\lambda^j - \lambda^{-j}).$

The odd solution of the approximate error equations
is thus of the form
\begin{equation}
\label{form}
 e_{\rho,j} = \begin{cases}
m_1 hj + \beta \left(\frac{\lambda^j - \lambda^{-j}}{\lambda^K}\right),
& 0 \leq j \leq K, \\
m_2 hj - m_2 + \hat{e}_{K+1} , & j = K+1, \\
m_2 hj -m_2, & K+2 \leq j \leq N,
\end{cases}
\end{equation}
where expressing the unknown $ e_{\rho,K+1}$ using a perturbation of the
linear solution, $\hat{e}_{K+1},$ simplifies the solution
of the equilibrium equations.
The four coefficients $m_1,\, m_2,\, \hat{e}_{K+1}, \text{ and } \beta$ can be
found by satisfying the four equilibrium equations in the interface, $j = K-1,\dots,K+2.$
Summing the equilibrium equations across the interface gives
\begin{equation*}
\begin{split}
\Delta \rhob &= \sum_{j=K-1}^{K+2} \rho_j
   = \sum_{j=K-1}^{K+2} (L^{qce,h} \eb_{\rhob})_j \\
  &= \phi''_F \left[ \frac{ e_{\rho,K-1} -  e_{\rho,K-2}}{h^2} \right]
     + 4 \phi''_{2F} \left[ \frac{ e_{\rho,K} +  e_{\rho,K-1} -  e_{\rho,K-2}
                            -  e_{\rho,K-3}}{4 h^2} \right]\\
  &  \qquad - (\phi''_F + 4 \phi''_{2F}) \left[ \frac{ e_{\rho,K+3}
                   -  e_{\rho,K+2}}{h^2} \right]\\
  &= (\phi''_F + 4 \phi''_{2F}) \left(\frac{m_1}{h} - \frac{m_2}{h}\right).
\end{split}
\end{equation*}
The cancellation of the exponential terms in the final equality holds
because
\begin{equation*}
\phi''_{2F}(\lambda^{K}-\lambda^{-K})+(\phi''_F+\phi''_{2F})(\lambda^{K-1}-\lambda^{-K+1}
-\lambda^{K-2}+\lambda^{-K+2})+\phi''_{2F}(-\lambda^{K-3}+\lambda^{-K+3})=0,
\end{equation*}
which can be seen by summing \eqref{ato} with the homogeneous solution
$ y_j=-\lambda^j$ for $j=-K+2,\dots,K-2.$
Thus, we have from summing the equilibrium equations~\eqref{rhoeq} across the
interface that
\begin{equation}
\label{m}
m_1 = m_2 + \frac{h \Delta \rhob}{\phi''_F + 4 \phi''_{2F}}.
\end{equation}
The equality ~\eqref{m} can be interpreted as saying that the
interfacial residual $\rhob$ acts as a source $f=\Delta \rhob$ in the
continuum equations ~\eqref{exact2} at
$x=x_K.$

\begin{lemma}
\label{erhobound}
For $\eb_\rhob$ defined in~\eqref{eqsplit}, we have that
\begin{equation}\label{rho2}
\begin{split}
\lpnorm{\eb_{\rhob}}{\infty} &\leq C h (1+|u'_{K+1/2}| +h|u''_{K+1/2}|
                  +h|u'''_{K+1/2}|), \\
\lpnorm{D \eb_{\rhob}}{p} &\leq C h^{1/p}(1+|u'_{K+1/2}| +h|u''_{K+1/2}|
                  +h|u'''_{K+1/2}|),
\end{split}
\end{equation}
where $C>0$ is independent of $h, K,$ and $p,\ 1 \leq p \leq \infty.$
\end{lemma}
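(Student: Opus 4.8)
The closed form \eqref{form} has already reduced \eqref{rhoeq} to the determination of the four constants $m_1,m_2,\beta,\hat e_{K+1}$, so the proof splits into two parts: (i) bound these constants in terms of the interfacial data, and (ii) feed those bounds into \eqref{form} and its backward difference to get the two estimates in \eqref{rho2}. I would open by noting why the stability results are not enough by themselves: applying Lemma~\ref{qce_2} to $L^{qce,h}\eb_{\rhob}=\rhob$ gives only $\lpnorm{D\eb_{\rhob}}{2}\le\tfrac1{2\nu}\lpnorm{\rhob}{2}=\mathrm{O}(h^{-1/2})$, since $\rho_j=\mathrm{O}(1/h)$ on the eight interfacial indices, whereas \eqref{rho2} at $p=2$ asks for $\mathrm{O}(h^{1/2})$. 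The two missing powers of $h$ come from the structure of \eqref{rhodef}: not only is $\sum_{j=K-1}^{K+2}\rho_j=\Delta\rhob=\mathrm{O}(h)$ as recorded in \eqref{osc}, but the whole $\mathrm{O}(1/h)$ part of $\rhob$ carries the alternating sign pattern $(+,-,-,+)$ over $j=K-1,\dots,K+2$, whose zeroth and first discrete moments both vanish. This cancellation is exactly what an explicit solve retains and a norm estimate discards.

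For part (i) I would substitute \eqref{form} into the four equilibrium equations $(L^{qce,h}\eb_{\rhob})_j=\rho_j$, $j=K-1,\dots,K+2$, using the interfacial case list defining $L^{qce,h}$. These form a $4\times4$ linear system in $(m_1,m_2,\beta,\hat e_{K+1})$; its summed equation is precisely \eqref{m}, which yields $|m_1-m_2|\le Ch^2|u'''_{K+1/2}|$ outright, and after multiplying through by $h^2$ the right-hand side becomes data of size $Ch(1+|u'_{K+1/2}|+h|u''_{K+1/2}|+h|u'''_{K+1/2}|)$. Because the root $\lambda>1$ in \eqref{roots} is a fixed constant depending only on $\phi$, every entry of the system is a bounded combination of $1$ and of $\lambda^{-k},\lambda^{k-2K}$ for $0\le k\le3$, so the coefficient matrix is an $h$- and $K$-independent invertible matrix perturbed by $\mathrm{O}(\lambda^{-2K})$ and therefore has inverse bounded uniformly in $h\in(0,1]$ and $K$. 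Solving it, and using \eqref{m} together with the moment cancellation to extract the extra factor of $h$, gives
\[
|m_1|+|m_2|+|\beta|+|\hat e_{K+1}|\ \le\ Ch\bigl(1+|u'_{K+1/2}|+h|u''_{K+1/2}|+h|u'''_{K+1/2}|\bigr),
\]
and in particular $|\beta|/h,\,|\hat e_{K+1}|/h\le C(1+|u'_{K+1/2}|+h|u''_{K+1/2}|+h|u'''_{K+1/2}|)$.

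For part (ii), abbreviate $\Theta:=1+|u'_{K+1/2}|+h|u''_{K+1/2}|+h|u'''_{K+1/2}|$. The $\ell^\infty$ estimate is read off \eqref{form}: for $0\le j\le K$ one has $0\le hj\le hK<1$ and $0\le(\lambda^j-\lambda^{-j})/\lambda^K\le1$, so $|e_{\rho,j}|\le|m_1|+|\beta|\le Ch\Theta$; for $K+1\le j\le N$ one has $|hj-1|<1$, so $|e_{\rho,j}|\le|m_2|+|\hat e_{K+1}|\le Ch\Theta$; oddness handles $j<0$. This is the first line of \eqref{rho2}. For the gradient estimate, differencing \eqref{form} gives $De_{\rho,j}=m_1+\tfrac{\beta(\lambda-1)}{h\lambda^K}(\lambda^{j-1}+\lambda^{-j})$ for $2\le j\le K$, whose exponential part is $\le C\tfrac{|\beta|}{h}\lambda^{j-1-K}$; $De_{\rho,j}=m_2$ for $K+3\le j\le N$; and the finitely many indices near $j=\pm K,\pm(K+1),\pm(K+2)$ each satisfy $|De_{\rho,j}|\le C\max\{|m_1|,|m_2|,|\beta|/h,|\hat e_{K+1}|/h\}\le C\Theta$. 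Summing with weight $h$, using $2Nh=2$ and the substitution $k=K-j$,
\[
\lpnorm{D\eb_{\rhob}}{p}^{p}=h\sum_{j=-N+1}^{N}|De_{\rho,j}|^{p}\ \le\ C\Bigl(h^p\Theta^p+h\,\Theta^p\sum_{k\ge0}\lambda^{-pk}+h\,\Theta^p\Bigr).
\]
Since $\sum_{k\ge0}\lambda^{-pk}=(1-\lambda^{-p})^{-1}\le(1-\lambda^{-1})^{-1}$ for all $p\ge1$ and $h^p\le h$ for $0<h\le1$, the right side is $\le Ch\Theta^p$; taking $p$-th roots gives $\lpnorm{D\eb_{\rhob}}{p}\le Ch^{1/p}\Theta$ with $C$ independent of $p$, while the case $p=\infty$ is the pointwise bound $\max_j|De_{\rho,j}|\le C\Theta$. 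This is the second line of \eqref{rho2}.

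The one genuinely laborious step is part (i): performing the substitution into the case-defined operator $L^{qce,h}$ near the interface and verifying the uniform invertibility of the resulting system, equivalently checking that $m_1,m_2,\beta,\hat e_{K+1}$ are all $\mathrm{O}(h)\,\Theta$ and not merely $\mathrm{O}(1)\,\Theta$. The extra power of $h$ is forced by exactly two structural facts: the cancellation \eqref{m}, which controls $m_1-m_2$, and the vanishing zeroth and first moments of the $\mathrm{O}(1/h)$ part of \eqref{rhodef}, which hold the common slope $m_2$ and the boundary-layer amplitudes $\beta,\hat e_{K+1}$ down. Everything after that --- differencing \eqref{form}, summing the geometric series, and the elementary inequalities $hj<1$ and $h^p\le h$ --- is routine bookkeeping.
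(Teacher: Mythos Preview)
Your approach matches the paper's: reduce \eqref{rhoeq} via the ansatz \eqref{form} to a linear system for $(m_1,m_2,\hat e_{K+1},\beta)$, use the summed relation \eqref{m} to eliminate $m_1$, invoke uniform invertibility of the reduced $3\times3$ matrix to conclude the coefficients are $O(h)$, and then read off \eqref{rho2} from \eqref{form} (your part~(ii) spells out what the paper leaves as ``From~\eqref{form}, we finally conclude~\eqref{rho2}''). Two minor deviations are worth flagging: the paper does not argue invertibility by perturbation from the $K\to\infty$ limit but instead quotes it as a separate Lemma~\ref{fullrank} (proved in~\cite{dobs08d}) valid for every $2\le K\le N-2$, which your perturbation argument alone does not cover for small $K$; and the vanishing first-moment observation you emphasize is correct but not actually used in the paper's bookkeeping---once $m_1$ is eliminated via \eqref{m}, the reduced right-hand side $\tilde{\mathbf b}$ is $O(h)$ simply because $h^2\rho_j=O(h)$, so no further moment condition is needed to force $m_2,\hat e_{K+1},\beta=O(h)$.
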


\begin{proof}
We will set up the system of equations for the coefficients in~\eqref{form}
and bound the decay of the coefficients.  We split the
interface equations as  $(A + h B) \xb = \bb,$ where
\begin{equation}
\begin{split}
A &= \left[
\begin{array}{rrrr}
  0
  &\frac{1}{2} \phi''_{2F}
  & -\frac{1}{2} \phi''_{2F}
  &\phi''_{2F} \gamma_{K+1} - \frac{1}{2} \phi''_{2F} \gamma_{K-1} \\[3pt]
  0
  & \phi''_F + \frac{5}{2} \phi''_{2F}
  & -\phi''_F - 2 \phi''_{2F}
  &\phi''_F \gamma_{K+1} + \phi''_{2F} \gamma_{K+2} + \frac{3}{2} \phi''_{2F} \gamma_{K} \\[3pt]
  0
  &-\phi''_F -\frac{5}{2} \phi''_{2F}
  &2\phi''_F +\frac{13}{2}\phi''_{2F}
  &-\phi''_F\gamma_{K} -2\phi''_{2F} \gamma_{K} - \frac{1}{2} \phi''_{2F} \gamma_{K-1} \\[3pt]
  0
  &- \frac{1}{2} \phi''_{2F}
  & -\phi''_F - 4 \phi''_{2F}
  &- \frac{1}{2} \phi''_{2F} \gamma_{K}
\end{array}
\right] ,\\
B &= \left[
\begin{array}{rrrr}
 (\half K + \frac{3}{2}) \phi''_{2F}
&  -(\half K + \half) \phi''_{2F}
&0 & 0\\[3pt]
(K+1) \phi''_F + (\frac{5}{2} K + 2) \phi''_{2F}
&- (K+1) \phi''_F - (\frac{5}{2} K + 3)\phi''_{2F}
 & 0 & 0 \\[3pt]
-K\phi''_F  - (\frac{5}{2} K - \half) \phi''_{2F}
& K \phi''_F  - (\frac{5}{2} K - \frac{3}{2}) \phi''_{2F}
 & 0 & 0 \\[3pt]
-\half K \phi''_{2F}
& (\half K+1)\phi''_{2F}
 & 0 & 0
\end{array}
\right] , \\
\xb &=
\left[
\begin{array}{l}
m_1 \\
m_2 \\
\hat{e}_{K+1} \\
\beta
\end{array}
\right], \qquad
\bb = h^2 \left[
\begin{array}{l}
\rho_{K-1} \\
\rho_{K} \\
\rho_{K+1} \\
\rho_{K+2}
\end{array}
\right].
\end{split}
\end{equation}
Using the equality~\eqref{m},
we rewrite the above as $(\At_K + h \Bt) \xbt = \bbt$ where
\begin{equation}
\begin{split}
\At_K &= \left[
\begin{array}{rrr}
 \frac{1}{2} \phi''_{2F}
  & -\frac{1}{2} \phi''_{2F}
  &\phi''_{2F} \gamma_{K+1} - \frac{1}{2} \phi''_{2F} \gamma_{K-1} \\[3pt]
-\phi''_F -\frac{5}{2} \phi''_{2F}
  &2\phi''_F +\frac{13}{2}\phi''_{2F}
  &-\phi''_F\gamma_{K} -2\phi''_{2F} \gamma_{K}
   - \frac{1}{2} \phi''_{2F} \gamma_{K-1} \\[3pt]
- \frac{1}{2} \phi''_{2F}
  & -\phi''_F - 4 \phi''_{2F}
  &- \frac{1}{2} \phi''_{2F} \gamma_{K}
\end{array}
\right] ,\\
\Bt &= \left[
\begin{array}{rrr}
 \phi''_{2F} & 0 & 0\\
-\phi''_{2F} & 0 & 0 \\
 \phi''_{2F} & 0 & 0
\end{array}
\right] , \qquad
\xbt =
\left[
\begin{array}{l}
m_2 \\
\hat{e}_{K+1} \\
\beta
\end{array}
\right], \\
\bbt &= h^2 \left[
\begin{array}{c}
\rho_{K-1} \\
\rho_{K+1} \\
\rho_{K+2}
\end{array}
\right]
- h^2 \frac{\Delta \rhob}{\phi''_F + 4 \phi''_{2F}} \left[
\begin{array}{c}
( \half K + \frac{3}{2}) \phi''_{2F} \\[3pt]
-K \phi''_F - (\frac{5}{2} K - \half) \phi''_{2F} \\[3pt]
- \half K \phi''_{2F}
\end{array}
\right],
\end{split}
\end{equation}
and $\gamma_j = \frac{\lambda^j - \lambda^{-j}}{\lambda^{K}}.$
We have omitted the second equation, as the full system is linearly
dependent after the elimination of $m_1$ by \eqref{m}.

We note that $\At_K,$ $\Bt,$ and $\bbt$ do not depend on $h$ directly,
though $\At_K$ may have indirect dependence if $K$ scales with $h.$
Therefore, we can neglect $\Bt$ for sufficiently small $h$ provided
that $\At_K^{-1}$ exists and is bounded uniformly in $K.$
The following lemma, proven in~\cite{dobs08d}, gives such a bound for
$\At_K.$

\begin{lemma}
\label{fullrank}
For all $K$ satisfying $2\le K\le N-2,$ the matrix $\At_K$ is nonsingular and
$||\At_K^{-1}|| \leq C$ where $C>0$ is
independent of $K$ and $h.$
\end{lemma}

Due to the definition of $\rhob$ \eqref{rhodef} and $\Delta \rhob$
~\eqref{osc}, we have that
\[
||\bbt||_{\ell^\infty} \leq C (h+h|u'_{K+1/2}| +h^2|u''_{K+1/2}|
                                 +h^2|u'''_{K+1/2}|).
\]
The $|u'''_{K+1/2}|$ contribution from $\Delta \rho$ does not have $h^3$
as coefficient since $K$ may scale linearly with $N=1/h.$
In general, we only have that $h K \leq 1.$

Applying Lemma~\ref{fullrank},
we see that $\xbt$ is O($h$), and by~\eqref{m}, so is $\xb.$
From~\eqref{form}, we finally conclude~\eqref{rho2}.
\end{proof}

\subsection{Total error}

Combining the estimates~\eqref{invass} and \eqref{siginfty} given
in Lemma~\ref{esigbound} for $\eb_{\sigb}$
with the estimate~\eqref{rho2} in Lemma~\ref{erhobound} for $\eb_{\rhob},$
we obtain from the triangle inequality that
\begin{theorem}
\label{qcethm}
Let $\eb$ denote the QCE error.
Then for $1 \leq p \leq \infty,$ $2\le K\le N-2,$ and $h$ sufficiently small,
the error can be bounded by
\begin{equation*}
\begin{split}
\lpnorm{\eb}{\infty} &\leq C h \left(1 + |u'_{K+1/2}| +h|u''_{K+1/2}|
       +h|u'''_{K+1/2}|+h\Lpnorm{u_e^{(4)}}{2}\right), \\
\lpnorm{D \eb}{p} &\leq C h^{1/p} \left(1 + |u'_{K+1/2}| +h|u''_{K+1/2}|
       +h|u'''_{K+1/2}|+h\Lpnorm{u_e^{(4)}}{2}\right).
\end{split}
\end{equation*}
\end{theorem}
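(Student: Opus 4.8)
The statement follows immediately from the decomposition $\eb=\eb_\rhob+\eb_\sigb$ of \eqref{eqsplit} together with Lemmas~\ref{esigbound} and~\ref{erhobound}, so the plan is essentially bookkeeping. First I would record that the splitting is legitimate: $L^{qce,h}$ is invertible on the space of odd, mean-zero periodic displacements by the positive-definiteness \eqref{stab} and the discrete Poincar\'e inequality \eqref{poincare}, and since $\rhob$ and $\sigb$ are both odd with $L^{qce,h}\eb=\rhob+\sigb$, the vectors $\eb_\rhob$, $\eb_\sigb$ solving \eqref{eqsplit} exist, are odd, and satisfy $\eb=\eb_\rhob+\eb_\sigb$.

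Next I would bound each norm by the triangle inequality. For the $\ell^\infty$ estimate, $\lpnorm{\eb}{\infty}\le\lpnorm{\eb_\rhob}{\infty}+\lpnorm{\eb_\sigb}{\infty}$; substituting the first line of \eqref{rho2} and the bound \eqref{siginfty} gives a bound by $Ch(1+|u'_{K+1/2}|+h|u''_{K+1/2}|+h|u'''_{K+1/2}|)+Ch^2\Lpnorm{u_e^{(4)}}{2}$, and the last term is exactly the summand $h\cdot h\Lpnorm{u_e^{(4)}}{2}$ appearing inside the parentheses, so the two bounds collect into the claimed form. For the $w^{1,p}$ estimate, $\lpnorm{D\eb}{p}\le\lpnorm{D\eb_\rhob}{p}+\lpnorm{D\eb_\sigb}{p}$; substituting the second line of \eqref{rho2} and the estimate \eqref{invass}, the only point needing a line of checking is that for $0<h\le1$ the $\eb_\sigb$ contribution is dominated by $Ch^{1/p}\cdot h\Lpnorm{u_e^{(4)}}{2}$: when $1\le p\le2$ one uses $h^2=h^{1/p}h^{2-1/p}\le h^{1/p}h$ since $2-1/p\ge1$, and when $2\le p\le\infty$ one uses $h^{3/2+1/p}\le h^{1+1/p}$ since $3/2\ge1$. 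Collecting again yields the stated bound, with $C$ depending only on the constants of Lemmas~\ref{esigbound} and~\ref{erhobound} and hence independent of $h$, $K$, and $p$.

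There is no genuine obstacle at this stage; all of the real analysis lives in the already-established lemmas --- in particular in the uniform invertibility of $\At_K$ (Lemma~\ref{fullrank}) behind the interfacial estimate \eqref{rho2}, and in the Poincar\'e/inverse/H\"older chain behind \eqref{siginfty}--\eqref{invass}. The present step only assembles the continuum-discretization error $\eb_\sigb$ and the interfacial coupling error $\eb_\rhob$ and absorbs the lower-order powers of $h$ into a single estimate.
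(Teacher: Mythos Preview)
Your proposal is correct and follows exactly the paper's approach: the paper simply states that the theorem follows from combining the estimates \eqref{siginfty}, \eqref{invass} of Lemma~\ref{esigbound} with \eqref{rho2} of Lemma~\ref{erhobound} via the triangle inequality on the decomposition $\eb=\eb_\rhob+\eb_\sigb$. Your additional bookkeeping (verifying the splitting is well-posed and checking that the powers of $h$ in \eqref{invass} are absorbed by $h^{1+1/p}$) is more detailed than what the paper writes out, but the argument is the same.
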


We note that the above argument giving optimal order estimates in Theorem~\ref{qcethm} only
utilized the estimate $\Delta \rhob=$O($1$), rather than the optimal estimate
$\Delta \rhob=$O($h$) given in ~\eqref{osc}.

Although our theorems give
optimal order rates of convergence, our assumptions on the required regularity on $u_e$ is
not optimal.  We have assumed for simplicity of exposition that
$\Lpnorm{u_e^{(4)}}{2}<\infty$ and used the estimate \eqref{second}.  Lower order estimates
for $\sigb$ such as
\[
\lpnorm{{\sigb}}{2}\le Ch^{2-s} \Lpnorm{u_e^{(4-s)}}{2},\qquad 2<s<4,
\]
can be used to reduce the regularity assumptions on $u_e$ and still obtain
optimal rates of convergence.  Assuming the full regularity on $u_e$ also makes
possible the precise identification and removal of the lower order terms in the error by the modification of the atomistic-to-continuum coupling scheme.

\section{Convergence of the Quasi-nonlocal Quasicontinuum Solution}
\label{sec:qnl}
For the quasi-nonlocal approximation, we split the residual as
\begin{equation}
\label{elqnl}
\begin{split}
L^{qnl,h} \eb = L^{qnl,h} \ub_e &- \fb = \rhob + \sigb,
\end{split}
\end{equation}
where
\begin{equation}
\begin{split}
\rhob = \begin{cases}
0, & 0 \leq j \leq K-1,\\
- \phi''_{2F} u''_{K+1/2} - \frac{1}{2} \phi''_{2F} u'''_{K+1/2}h, & j = K, \\
\hphantom{-} \phi''_{2F} u''_{K+1/2}
             - \frac{1}{2}  \phi''_{2F} u'''_{K+1/2}h, & j = K+1,\\
0, & K+2 \leq j \leq N,
\end{cases}
\end{split}
\end{equation}
and where
\begin{equation*}
\begin{split}
\lpnorm{\sigb}{p} &\leq C h^2 \Lpnorm{ u_e^{(4)}}{p}.
\end{split}
\end{equation*}

The residual maximum norm $\lpnorm{\rhob}{\infty}$ here is O($1$) as opposed to the energy-based quasicontinuum which
has a O($1/h$) residual maximum norm.
However, the sum of $\rhob$ is similarly O($h$), that is,
\begin{equation}\label{neg2}
\Delta \rhob=-h\phi''_{2F} u'''_{K+1/2}.
\end{equation}

A similar argument as in the QCE case follows.
We split the error as
\begin{equation*}
\eb = \eb_\rhob + \eb_\sigb,
\end{equation*}
where
\begin{equation*}
\begin{split}
L^{qnl,h} \eb_{\rhob}&=\rhob,\qquad  e_{\rho,j}=-e_{\rho,-j},\\
L^{qnl,h} \eb_{\sigb}&=\sigb, \qquad  e_{\sigma,j}=-e_{\sigma,-j}.
\end{split}
\end{equation*}
The same arguments apply to give the bounds~\eqref{invass}
and~\eqref{siginfty} on $\eb_{\sigb}.$  Thus, we need to work through
the modified argument to bound $\eb_\rhob.$
Since $\rhob$ is non-zero only at $j = \pm \{K, K+1\},$
the odd solution $\eb_\rhob$ has the form
\begin{equation}
e_{\rho,j} = \begin{cases}
m_1 hj + \beta (\frac{\lambda^j - \lambda^{-j}}{\lambda^K}), & 0\leq j\leq K,\\
m_2 hj - m_2, & K+1 \leq j \leq N.
\end{cases}
\end{equation}
Summing across the interface again gives
\begin{equation*}
\begin{split}
\Delta \rhob &:= \sum_{j=K-1}^{K+2} \rho_j
   = \sum_{j=K-1}^{K+2} (L^{qce,h} \eb_{\rhob})_j \\
  &= (\phi''_F + 4 \phi''_{2F}) \left(\frac{m_1}{h} - \frac{m_2}{h}\right).
\end{split}
\end{equation*}
Thus, we have again that
\begin{equation}
\label{m2}
m_1 = m_2 + \frac{h \Delta \rhob}{\phi''_F + 4 \phi''_{2F}}.
\end{equation}

We focus on the equations at $j = K-1, K, \text{ and } K+1$ and split the
interface equations as  $(A + h B) \xb = \bb,$ where
\begin{equation}
\begin{split}
A &= \left[
\begin{array}{rrr}
  0
  &\phi''_{2F}
  &\phi''_{2F} \gamma_{K+1}\\[3pt]
  0
  & \phi''_F + 2 \phi''_{2F}
  &\phi''_F \gamma_{K+1} + \phi''_{2F} \gamma_{K+2} + \phi''_{2F} \gamma_{K} \\[3pt]
  0
  &-\phi''_F -3 \phi''_{2F}
  &-\phi''_F \gamma_{K} - 2\phi''_{2F} \gamma_{K} - \phi''_{2F} \gamma_{K-1} \\[3pt]
\end{array}
\right], \\
B &= \left[
\begin{array}{rrr}
(K + 1) \phi''_{2F}
& -(K + 1) \phi''_{2F}
&0 \\[3pt]
(K+1) (\phi''_F + \phi''_{2F})
&-(K+1) (\phi''_F + \phi''_{2F})
 & 0  \\[3pt]
-K (\phi''_F + 3 \phi''_{2F}) + \phi''_{2F}
& K (\phi''_F + 3 \phi''_{2F}) - \phi''_{2F}
 & 0  \\[3pt]
\end{array}
\right] , \\
\xb &=
\left[
\begin{array}{l}
m_1 \\
m_2 \\
\beta
\end{array}
\right], \qquad
\bb = h^2 \left[
\begin{array}{l}
\rho_{K-1} \\
\rho_{K} \\
\rho_{K+1}
\end{array}
\right].
\end{split}
\end{equation}
Using the equality~\eqref{m2},
we rewrite the above as $\At_K  \xbt = \bbt$ where
\begin{equation}
\begin{split}
\At_K &= \left[
\begin{array}{rr}
  \phi''_{2F}
  &\phi''_{2F} \gamma_{K+1}\\[3pt]
   \phi''_F + 2 \phi''_{2F}
  &\phi''_F \gamma_{K+1} + \phi''_{2F} \gamma_{K+2} + \phi''_{2F} \gamma_{K} \\[3pt]
\end{array}
\right], \\
\xb &=
\left[
\begin{array}{l}
m_2 \\
\beta
\end{array}
\right], \qquad
\bbt = h^2 \left[
\begin{array}{l}
\rho_{K-1} \\
\rho_{K} \\
\end{array}
\right]
+ h^2 \frac{\Delta \rhob}{\phi''_F + \phi''_{2F}} \left[
\begin{array}{r}
(K + 1)  \phi''_{2F}  \\[3pt]
(K+1) (\phi''_F + \phi''_{2F}) \\[3pt]
\end{array}
\right].
\end{split}
\end{equation}
We have omitted the second equation, as the full system is linearly
dependent after substitution of $m_1.$  We have that $\At_K$ has full
rank and
$||\bbt||_{\ell^\infty} \leq Ch^2 (|u''_{K+1/2}| + |u'''_{K+1/2}|),$
so that we obtain the following error estimate for the
quasi-nonlocal approximation.

\begin{theorem}
\label{qnlthm}
Let $\eb$ be the solution to the quasi-nonlocal error
equation~\eqref{elqnl}.
Then for $1 \leq p \leq \infty,$ $2\le K\le N-2,$ and $h$ sufficiently small,
the error can be bounded by
\begin{equation*}
\begin{split}
\lpnorm{\eb}{\infty} &\leq C h^2 \left(|u''_{K+1/2}|
      +|u'''_{K+1/2}|+\Lpnorm{u_e^{(4)}}{2}\right), \\
\lpnorm{D \eb}{p} &\leq C h^{1+1/p}\left(|u''_{K+1/2}|
      +|u'''_{K+1/2}|+\Lpnorm{u_e^{(4)}}{2}\right),
\end{split}
\end{equation*}
where $C>0$ is independent of $h, K,$ and $p.$
\end{theorem}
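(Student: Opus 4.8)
The plan is to mirror the QCE argument of Section~\ref{sec:comp}, splitting the error as $\eb=\eb_\rhob+\eb_\sigb$ and treating the two pieces separately. For $\eb_\sigb$ there is nothing new: the residual estimate $\lpnorm{\sigb}{p}\le Ch^2\Lpnorm{u_e^{(4)}}{p}$ together with the QNL stability Lemma~\ref{qnl_stab} gives $\lpnorm{D\eb_\sigb}{2}\le Ch^2\Lpnorm{u_e^{(4)}}{2}$, and then the Poincar\'e inequality~\eqref{poincare3}, the inverse estimate~\eqref{inverse}, and the H\"older estimates~\eqref{holder} yield exactly the bounds~\eqref{siginfty} and~\eqref{invass}, verbatim as in Lemma~\ref{esigbound}. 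So the real work is the explicit construction and estimation of $\eb_\rhob$, which I would carry out as follows.

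First I would record that, because $\rho_j$ is supported only at $j=\pm\{K,K+1\}$, $e_{\rho,j}$ solves the homogeneous five-point atomistic recurrence~\eqref{ato} for $|j|\le K-1$ and the homogeneous three-point continuum recurrence for $|j|\ge K+1$; hence, seeking an odd solution, it must have the form displayed just before the theorem, with three unknowns $m_1,m_2,\beta$ (one fewer than in QCE, since the QNL interface has no perturbed node $\hat e_{K+1}$). Second, I would sum the equilibrium equations $L^{qnl,h}\eb_\rhob=\rhob$ across the interface; the exponential contributions cancel by the same identity used in the QCE case (summing~\eqref{ato} against $y_j=-\lambda^j$), giving the relation $m_1=m_2+h\Delta\rhob/(\phi''_F+4\phi''_{2F})$ of~\eqref{m2}. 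Third, writing out the two remaining interface equations at $j=K$ and $j=K+1$ (the $j=K-1$ equation being dependent after eliminating $m_1$) produces the linear system $(A+hB)\xb=\bb$ displayed above, which after substituting~\eqref{m2} reduces to $\At_K\xbt=\bbt$ in the two unknowns $m_2,\beta$.

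The crux of the argument is then the same as for QCE: I need $\At_K$ to be invertible with $\|\At_K^{-1}\|\le C$ uniformly in $K$ and $h$. Here the structure is actually simpler than in Lemma~\ref{fullrank}: $\At_K$ is $2\times 2$, its first column is the constant vector $(\phi''_{2F},\,\phi''_F+2\phi''_{2F})^T$, and its second column involves $\gamma_j=(\lambda^j-\lambda^{-j})/\lambda^K$, which for large $K$ is essentially $(\lambda^{j-K}-\lambda^{-j-K})$, so $\gamma_{K+1}\to\lambda$, $\gamma_{K+2}\to\lambda^2$, $\gamma_K\to 1$ as $K\to\infty$ (with $\lambda>1$ by~\eqref{posdef},~\eqref{assume}). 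One checks the determinant stays bounded away from zero uniformly in $K$ — the limiting determinant is a nonzero expression in $\lambda$ and the $\phi''$-values, and for small $K$ one verifies nonsingularity directly for $2\le K\le N-2$; this is the step I expect to require the most care, and it is presumably what is deferred to~\cite{dobs08d} in the QCE case, adapted here. Granting this, since $\rho_K,\rho_{K+1}=\text{O}(|u''_{K+1/2}|+h|u'''_{K+1/2}|)$ and $\Delta\rhob=-h\phi''_{2F}u'''_{K+1/2}$, the right-hand side satisfies $\|\bbt\|_{\ell^\infty}\le Ch^2(|u''_{K+1/2}|+|u'''_{K+1/2}|)$ (the factor $K$ in the $B$-type terms multiplied by the extra $h$ from $\Delta\rhob$ contributes $hK\le 1$, not $h^2$). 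Hence $\xbt$, and by~\eqref{m2} also $m_1$, are $\text{O}(h^2(|u''_{K+1/2}|+|u'''_{K+1/2}|))$; feeding this back into the explicit form of $e_{\rho,j}$ — using $|m_i hj|\le |m_i|$ since $hj\le 1$, and $|\beta\gamma_j|\le C|\beta|$ since $\gamma_j$ is bounded for $0\le j\le K$ — gives $\lpnorm{\eb_\rhob}{\infty}\le Ch^2(|u''_{K+1/2}|+|u'''_{K+1/2}|)$ and, computing the differences $De_{\rho,j}$ (linear part plus a geometrically decaying exponential part whose $\ell^p_h$ norm is $\text{O}(h^{1/p})$ times its $\ell^\infty$ bound), $\lpnorm{D\eb_\rhob}{p}\le Ch^{1+1/p}(|u''_{K+1/2}|+|u'''_{K+1/2}|)$. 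Combining with the $\eb_\sigb$ bounds by the triangle inequality gives the stated estimates.
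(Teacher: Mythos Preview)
Your proposal is correct and follows essentially the same route as the paper: split $\eb=\eb_\rhob+\eb_\sigb$, handle $\eb_\sigb$ exactly as in Lemma~\ref{esigbound} via the QNL stability result, and treat $\eb_\rhob$ by writing down the explicit odd ansatz with three unknowns $m_1,m_2,\beta$, summing across the interface to get~\eqref{m2}, and solving the reduced $2\times 2$ system $\At_K\xbt=\bbt$ with $\|\bbt\|_{\ell^\infty}\le Ch^2(|u''_{K+1/2}|+|u'''_{K+1/2}|)$. The only cosmetic differences are that the paper retains the $j=K-1$ and $j=K$ equations (dropping $j=K+1$) rather than your choice of $j=K$ and $j=K+1$, and the paper simply asserts ``$\At_K$ has full rank'' whereas you sketch the $K\to\infty$ limiting argument; neither changes the substance of the proof.
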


We note that the proof above of the optimal order estimates for the quasi-nonlocal approximation
does use the full O($h$) order of the estimate~\eqref{neg2} for $\Delta\rhob.$

}

\end{document}